\theoremstyle{plain}
\newtheorem{theorem}{Theorem}
\newtheorem{thm}[theorem]{Theorem}
\newtheorem{lemma}[theorem]{Lemma}
\newtheorem{corollary}[theorem]{Corollary}
\theoremstyle{definition}
\newtheorem{definition}[theorem]{Definition}
\theoremstyle{remark}
\newtheorem{remark}[theorem]{Remark}
\numberwithin{theorem}{section}
\numberwithin{equation}{section}
\renewcommand{\div}{\ensuremath{\mathrm{div}}}
\newcommand*{\R}{\mathbb{R}}
\newcommand*{\Z}{\mathbb{Z}}
\newcommand*{\eps}{\varepsilon}
\newcommand*{\doo}{\partial}
\newcommand*{\ol}[1]{\overline{#1}}
\newcommand{\sulut}[1]{\left( #1 \right)}
\newcommand{\joukko}[1]{\left\{ #1 \right\}}
\newcommand{\abs}[1]{\left\lvert #1 \right\rvert}
\newcommand{\norm}[1]{\left\| #1 \right\|}
\newcommand{\der}{\mathrm{d}}
\newcommand{\ip}[2]{\left\langle#1,#2\right\rangle}
\DeclareMathOperator{\dist}{dist}
\DeclareMathOperator{\cessco}{conv}
\DeclareMathOperator*{\essinf}{ess \, inf}
\DeclareMathOperator*{\esssupp}{supp}
\newcommand{\kommentar}[1]{}
\title[Monotonicity and enclosure methods for the $p$-Laplacian]{Monotonicity and enclosure methods for the $p$-Laplace equation}
\author{Tommi Brander}
\address{University of Jyv\"askyl\"a, Department of Mathematics and Statistics}
\email{tommi.o.brander@jyu.fi}
\author{Bastian Harrach}
\address{Department of Mathematics,
Goethe University Frankfurt, Germany}
\email{harrach@math.uni-frankfurt.de}
\author{Manas Kar}
\address{NCTS-National Center for Theoretical Sciences,
National Taiwan University, Taiwan}
\email{manas.kar@ncts.ntu.edu.tw}
\author{Mikko Salo}
\address{University of Jyv\"askyl\"a, Department of Mathematics and Statistics}
\email{mikko.j.salo@jyu.fi}
\begin{document}

\begin{abstract} 
We show that the convex hull of a monotone perturbation of a homogeneous background conductivity in the $p$-conductivity equation is  determined by knowledge of the nonlinear Dirichlet-Neumann operator.
We give two independent proofs, one of which is based on the monotonicity method and the other on the enclosure method.
Our results are constructive and require no jump or smoothness properties on the conductivity perturbation or its support.
\end{abstract}

\maketitle





\section{Introduction}

We consider the shape reconstruction problem for nonlinear partial differential equations of $p$-Laplace type.
More precisely, we are interested in identifying the shape and location of an unknown inclusion from boundary voltage to current measurements. Assume $\Omega\subset\mathbb{R}^n, n\geq 2$, to be a bounded open set with conductivity $\sigma\in L_{+}^{\infty}\sulut{\Omega}$, where 
\[
L_{+}^{\infty}\sulut{\Omega} = \joukko{f \in L^\infty\sulut{\Omega}; \essinf f > 0}.
\]
We assume that the conductivity is constant (taken to be $1$ for simplicity) outside an unknown inclusion $D$, so that the inclusion is the set 
\[
D = \mathrm{supp}(\sigma-1).
\]
We study the problem of detecting inclusions from boundary measurements for the $p$-conductivity equation 
\begin{equation} \label{eq:p-conductivity} 
\begin{cases}
\div(\sigma(x)\abs{\nabla u}^{p-2}\nabla u) = 0 \ \text{in}\ \Omega \\
u = f \ \text{on}\ \doo\Omega,
\end{cases}
\end{equation}
where the exponent $p$ is in the range $1<p<\infty$. Given a Dirichlet boundary condition $f\in W^{1,p}(\Omega)/W_{0}^{1,p}(\Omega)$, the forward problem \eqref{eq:p-conductivity} is well-posed in $W^{1,p}(\Omega)$ and the weak solution minimizes the energy functional
\[
E_{\sigma}(v) := \int_{\Omega}\sigma \abs{\nabla v}^p dx
\]  
over all $v\in W^{1,p}(\Omega)$ with $v-f \in W_{0}^{1,p}(\Omega)$ (see e.g.\ \cite{Salo:Zhong:2012, Brander:Ilmavirta:Kar:2015}).

The boundary voltage to current map, also called the nonlinear Dirichlet-Neumann~(DN) map, is the map 
\[
\Lambda_{\sigma} : X \rightarrow X'
\]
defined formally by
\[
\Lambda_{\sigma}(f) := \sigma \abs{\nabla u}^{p-2}\nabla u \cdot \nu|_{\doo\Omega},
\]
where $X := W^{1,p}(\Omega)/W_{0}^{1,p}(\Omega)$ and $X'$ is the dual of $X$, and $\nu$ is the unit outward normal to $\doo\Omega$.
See section~\ref{subsec:DN} for the precise weak definition of $\Lambda_{\sigma}$.

In the special case $p=2$, the equation~\eqref{eq:p-conductivity} is the well-known conductivity equation appearing in Calder\'on's inverse problem~\cite{Calderon:1980}. This problem has been studied extensively in the last 35~years, see the survey \cite{Uhlmann:2014} for recent results. Our problem is an analogue of the standard Calder\'on problem for nonlinear $p$-Laplace type equations. These appear as models in various physical phenomena, e.g.\ nonlinear dielectrics, plastic moulding, electro-rheological and thermo-rheological fluids, fluids governed by a power law, viscous flows in glaciology, or plasticity. Also, the $0$- and $1$-Laplacians have applications in ultrasound modulated electrical impedance tomography (UMEIT) and current density imaging (CDI). See the references in \cite{Brander:Kar:Salo:2015} and \cite[section 2.1]{Brander:2016:apr} for more details.

Our purpose is to detect the shape and location of the inclusion $D$ from boundary measurements for the $p$-conductivity equation~\eqref{eq:p-conductivity}, as encoded by the DN map $\Lambda_{\sigma}$. More precisely, we will be able to reconstruct the essential convex hull of $D$ assuming that $\sigma-1$ does not change sign. Such a result was proved in \cite{Brander:Kar:Salo:2015} for inclusions having some regularity and sharp jumps at the interface, by extending the enclosure method of Ikehata~\cite{Ikehata:1998,Ikehata:1999:jan} to this nonlinear model. In this paper we 
remove all regularity and interface jump assumptions, and also show that monotonicity based shape reconstruction methods~\cite{Tamburrino:Rubinacci:2002,
Tamburrino:2006,Harrach:Ullrich:2013} work in the nonlinear case and allow us to find the convex hull of the inclusion. 

The following theorem is the main result of this paper. It does not require any regularity or jump properties for the inclusion, and we obtain this result using both the monotonicity and the enclosure method. See Section~\ref{sec:prelim} for the definition of the essential convex hull.

\begin{theorem} \label{cor:main}
Suppose~$\Omega \subset \R^n$ is a bounded and open set, $n \geq 2$, and $1 < p < \infty$.
Consider a conductivity~$\sigma \in L^\infty_+(\Omega)$ such that either $\sigma \geq 1$ almost everywhere or $\sigma \leq 1$ almost everywhere.
Then we can recover the essential convex hull of $\mathrm{supp}(\sigma-1)$ from the DN map $\Lambda_{\sigma}$, and we can also determine whether $\sigma$ is less than or greater than 1 almost everywhere.
\end{theorem}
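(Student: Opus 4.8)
The plan is to establish the two directions of reconstruction separately, using the monotonicity method as the primary tool since it most transparently handles the lack of regularity. The key analytic ingredient I would develop first is a pair of monotonicity inequalities relating the DN map to the conductivity. Concretely, I expect to prove that if $\sigma_1 \leq \sigma_2$ almost everywhere, then the associated energies satisfy $E_{\sigma_1}(u_1) \leq E_{\sigma_2}(u_2)$ for the respective minimizers, and more usefully a two-sided estimate of the form
\begin{equation*}
\int_\Omega (\sigma_2 - \sigma_1) \abs{\nabla u_2}^p \dx \leq \vev{(\Lambda_{\sigma_1} - \Lambda_{\sigma_2})f, f} \leq \int_\Omega (\sigma_2 - \sigma_1) \abs{\nabla u_1}^p \dx,
\end{equation*}
where $\vev{\cdot,\cdot}$ denotes the duality pairing between $X$ and $X'$, $u_j$ solves \eqref{eq:p-conductivity} with conductivity $\sigma_j$ and boundary data $f$, and I write the pairing using the convention $\vev{\Lambda_\sigma f, f} = E_\sigma(u)$ that follows from the weak formulation. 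The derivation of these inequalities rests on the convexity of the energy functional $E_\sigma$ and the variational characterization of the solution as the minimizer; the left inequality follows by testing the minimality of $u_2$ against $u_1$ (and vice versa), exploiting that $\abs{\nabla v}^p$ is convex in $\nabla v$.

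Next I would use these monotonicity estimates to detect, for a given open half-space or more flexibly a given test domain $B$, whether the inclusion $D = \supp(\sigma - 1)$ reaches into $B$. The idea is to insert specially constructed conductivities or, in the enclosure-method spirit, specially constructed boundary data (complex geometric optics or oscillating exponential solutions adapted to the $p$-Laplacian) so that the right-hand side $\int_\Omega (\sigma_2 - \sigma_1)\abs{\nabla u}^p \dx$ concentrates near a chosen hyperplane. By monotonicity, probing with all half-spaces and recording from the DN map alone whether the indicator quantity blows up or stays bounded, one reconstructs the intersection of all half-spaces containing $D$, which is precisely the convex hull; taking essential supports throughout yields the essential convex hull as defined in Section~\ref{sec:prelim}. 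The sign of $\sigma - 1$ is read off directly from the sign of $\vev{(\Lambda_\sigma - \Lambda_1)f, f}$, since the monotonicity inequality forces this pairing to be nonnegative when $\sigma \geq 1$ and nonpositive when $\sigma \leq 1$, and it is strictly signed as soon as $D$ has positive measure; this simultaneously tells us whether $\sigma$ is above or below $1$.

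I expect the main obstacle to lie in the nonlinearity of the problem at two points. First, the monotonicity inequalities above are no longer exact identities as in the linear case $p = 2$: the energy is not quadratic, so the clean relation $\vev{(\Lambda_{\sigma_1} - \Lambda_{\sigma_2})f, f} = \int (\sigma_1 - \sigma_2)\abs{\nabla u}^p$ must be replaced by the two-sided convexity estimate, and one must argue carefully that these bounds are still sharp enough to localize the inclusion. Second, and more seriously, the enclosure-method step requires solutions whose gradient energy genuinely concentrates along a hyperplane; constructing such solutions for the $p$-Laplacian without regularity of $\sigma$, and controlling the remainder terms in the absence of an interface jump, is delicate and will require either careful barrier arguments or the oscillating-exponential constructions adapted from \cite{Brander:Kar:Salo:2015}. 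The payoff of running both methods, as promised in the statement, is that the monotonicity approach supplies robustness against irregular supports while the enclosure approach supplies the explicit hyperplane-by-hyperplane reconstruction, and cross-checking the two gives the full essential convex hull with no regularity hypotheses.
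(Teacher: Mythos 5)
Your overall architecture is the same as the paper's: two-sided monotonicity estimates on the pairing, explicit exponential (Wolff) solutions concentrating across hyperplanes, probing by half-spaces to assemble the essential convex hull, and reading the sign of $\sigma-1$ off the sign of the pairing. The gap lies in your ``key analytic ingredient,'' and it is fatal as stated. First, a sign error: if $\sigma_1\le\sigma_2$ a.e.\ then $\Lambda_{\sigma_1}\le\Lambda_{\sigma_2}$, so the middle term $\langle(\Lambda_{\sigma_1}-\Lambda_{\sigma_2})f,f\rangle$ in your display is nonpositive while your left-hand side $\int_\Omega(\sigma_2-\sigma_1)\abs{\nabla u_2}^p\,\der x$ is nonnegative; the chain cannot hold. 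Second, and more seriously, the inequality that the minimality/convexity argument you describe actually yields is
\[
\int_\Omega(\sigma_1-\sigma_2)\abs{\nabla u_1}^p\,\der x \;\le\; \langle(\Lambda_{\sigma_1}-\Lambda_{\sigma_2})f,f\rangle \;\le\; \int_\Omega(\sigma_1-\sigma_2)\abs{\nabla u_2}^p\,\der x,
\]
in which the two bounds involve \emph{different} solutions. Specializing to the pair $(\sigma,1)$, the bound expressed through the known background solution sits only on one side; the other side involves $\abs{\nabla u_\sigma}^p$, the gradient of the solution for the \emph{unknown} conductivity. That side is exactly the one you need for the blow-up direction: when the probing half-space meets $\supp(\sigma-1)$ in positive measure you must force the indicator to blow up, which requires a lower bound on the pairing in terms of a solution you can compute (the Wolff solution of lemma~\ref{lemma:Wolff}). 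You have no way to bound $\abs{\nabla u_\sigma}$ from below on the relevant set (no explicit formula, no localized potentials, and even unique continuation for the $p$-Laplacian is not available here), so your estimate yields neither the blow-up in the enclosure method nor the converse (no-false-positives) direction of the monotonicity test, nor the strict sign needed to decide whether $\sigma\ge 1$ or $\sigma\le 1$.

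What the paper uses instead, and what your proposal is missing, is the refined monotonicity inequality (lemma~\ref{lemma:mono}, quoted from \cite{Brander:Kar:Salo:2015}):
\[
(p-1)\int_\Omega \frac{\sigma_0}{\sigma_1^{1/(p-1)}}\left(\sigma_1^{\frac{1}{p-1}}-\sigma_0^{\frac{1}{p-1}}\right)\abs{\nabla u_0}^p\,\der x
\;\le\; \langle(\Lambda_{\sigma_1}-\Lambda_{\sigma_0})f,f\rangle
\;\le\; \int_\Omega(\sigma_1-\sigma_0)\abs{\nabla u_0}^p\,\der x,
\]
in which \emph{both} bounds are expressed through the solution $u_0$ of the single reference conductivity $\sigma_0$. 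This does not follow from simply testing the two minimizers against each other; it requires a finer pointwise convexity estimate. Once you have it, everything you sketch goes through with $\sigma_0=1$ and $u_0$ a Wolff solution: the upper bound gives decay of the indicator when the half-space essentially contains the inclusion (lemma~\ref{lemma:expdown}), the lower bound gives blow-up otherwise (lemma~\ref{lemma:good_set}), the direction of blow-up determines the sign of $\sigma-1$, and the same two bounds yield both the positive monotonicity tests (theorem~\ref{thm:monotonicity_method}(a),(b)) and, combined with hyperplane separation, their converses (theorem~\ref{thm:monotonicity_method}(c),(d)). So the geometry and the reconstruction logic of your proposal are sound, but the central inequality, as you derive it, does not have the structure required to run either method.
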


In addition, we give an alternative proof for the boundary determination result of Salo and Zhong~\cite[Theorem 1.1]{Salo:Zhong:2012}. We use an enclosure type method to recover a continuous conductivity on the boundary of a strictly convex domain, whereas \cite{Salo:Zhong:2012} considers domains with~$C^1$ boundary with no convexity assumptions.

\begin{theorem}
Let $\Omega \subset \R^n$ be bounded, open and strictly convex. Let $\sigma_1$, $\sigma_2$ be positive continuous functions on~$\ol\Omega$. If $\Lambda_{\sigma_1} = \Lambda_{\sigma_2}$, then $\sigma_1|_{\doo \Omega} = \sigma_2|_{\doo \Omega}$.
\end{theorem}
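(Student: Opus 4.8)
The plan is to prove boundary determination of a continuous conductivity via an enclosure-type argument using special solutions that concentrate their energy near a chosen boundary point. Since $\Lambda_{\sigma_1} = \Lambda_{\sigma_2}$, the two energies agree on every Dirichlet datum, and the strategy is to design a family of boundary data whose solutions localize near an arbitrary fixed point $x_0 \in \doo\Omega$, so that the leading asymptotics of the energy isolate the boundary value $\sigma(x_0)$.

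\medskip

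First I would exploit the variational characterization. For any admissible datum $f$, the solution $u_j$ minimizes $E_{\sigma_j}$, so the energy $E_{\sigma_j}(u_j) = \langle \Lambda_{\sigma_j}(f), f\rangle$ is determined by the DN map; hence $\Lambda_{\sigma_1}=\Lambda_{\sigma_2}$ forces $E_{\sigma_1}(u_1^{(f)}) = E_{\sigma_2}(u_2^{(f)})$ for all $f$. Next, fixing a boundary point $x_0$ and an outward direction, I would use the strict convexity of $\Omega$ to choose a supporting hyperplane touching $\ol\Omega$ only at $x_0$, and build on it the oscillating/exponentially weighted test functions of enclosure type (Ikehata-style complex geometric optics analogues, e.g. $f_\tau = e^{\tau(x\cdot\omega + \im x\cdot\omega^\perp)}|_{\doo\Omega}$ or their $p$-Laplace counterparts), with the weight chosen so that, as the large parameter $\tau\to\infty$, the mass of $|\nabla u_\tau|^p$ concentrates in a shrinking neighborhood of $x_0$. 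Because $\sigma$ is continuous, on this shrinking neighborhood $\sigma$ is uniformly close to $\sigma(x_0)$, so I expect an asymptotic expansion of the form $E_{\sigma}(u_\tau) = \sigma(x_0)\, C(\tau)(1+o(1))$ with a known, conductivity-independent normalizing factor $C(\tau)$.

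\medskip

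Given such an expansion for both $\sigma_1$ and $\sigma_2$ using the \emph{same} data $f_\tau$, equating the energies and dividing by the common factor $C(\tau)$ yields $\sigma_1(x_0)(1+o(1)) = \sigma_2(x_0)(1+o(1))$, whence $\sigma_1(x_0)=\sigma_2(x_0)$. Since $x_0\in\doo\Omega$ was arbitrary and both conductivities are continuous up to the boundary, this gives $\sigma_1|_{\doo\Omega}=\sigma_2|_{\doo\Omega}$. The key quantitative input I would need is a two-sided control of the energy: an upper bound obtained by using the explicit test function $f_\tau$ itself as a competitor in the minimization (so $E_\sigma(u_\tau)\le E_\sigma(f_\tau$-extension$)$), and a matching lower bound showing that no cheaper configuration avoids the concentration region, which is where strict convexity enters to guarantee the hyperplane meets $\ol\Omega$ only at $x_0$ and the weight genuinely localizes.

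\medskip

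The main obstacle I anticipate is the nonlinearity: unlike the $p=2$ case, there are no exact complex geometric optics solutions, so the concentrating test functions only furnish an \emph{upper} energy bound directly, and establishing the matching \emph{lower} bound (hence pinning down the precise leading coefficient proportional to $\sigma(x_0)$ rather than merely an inequality) requires careful control of the $p$-energy of the true minimizer near a rough boundary point. I would handle this by combining the convexity-adapted weight with a Caccioppoli/comparison estimate to confine the energy, and by using the continuity of $\sigma$ to replace $\sigma$ by the constant $\sigma(x_0)$ up to an $o(1)$ error on the concentration set, so that the leading asymptotics are governed by the constant-coefficient $p$-Laplacian, for which the normalizing factor $C(\tau)$ can be computed explicitly.
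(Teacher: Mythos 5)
Your skeleton matches the paper's: fix $x_0\in\doo\Omega$, use strict convexity to get a supporting half-space $H=\{x\cdot\rho\le t_0\}$ with $\ol\Omega\cap\doo H=\{x_0\}$, take Wolff solutions $u_\tau$ (Lemma~\ref{lemma:Wolff}) whose $p$-energy concentrates exponentially in the slab $S_t=\{x\cdot\rho>t\}\cap\ol\Omega$ (strict convexity forces $S_t\subset B(x_0,r)\cap\ol\Omega$ for $t$ near $t_0$), and use continuity of $\sigma$ to freeze the coefficient there. The genuine gap is precisely the step you flag as the ``main obstacle'': the lower energy bound. The upper bound by using the Wolff extension as a competitor is fine, but a ``Caccioppoli/comparison estimate'' does not deliver $\ip{\Lambda_\sigma(f_\tau)}{f_\tau}\ge(\sigma(x_0)-\eps)\,C(\tau)(1-o(1))$. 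The natural comparison argument, namely $\int_\Omega\sigma|\nabla u_\sigma^{f_\tau}|^p\der x\ \ge\ \essinf_\Omega\sigma\int_\Omega|\nabla u_\sigma^{f_\tau}|^p\der x\ \ge\ \essinf_\Omega\sigma\, C(\tau)$ (using that $u_\tau$ is the exact unweighted minimizer for its own boundary data), produces the constant $\essinf_\Omega\sigma$, not $\sigma(x_0)$; upgrading it to $\sigma(x_0)-\eps$ would require proving that the minimizer for the \emph{weighted} energy cannot relocate its energy away from the slab, which your sketch does not address and which is nontrivial.

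The missing ingredient is the monotonicity inequality (Lemma~\ref{lemma:mono}) applied with a \emph{constant} reference conductivity $\gamma$: since the Wolff function $u_\tau$ is an exact solution for every constant conductivity, both bounds
\[
(p-1)\int_\Omega \frac{\gamma}{\sigma^{1/(p-1)}}\left(\sigma^{1/(p-1)}-\gamma^{1/(p-1)}\right)|\nabla u_\tau|^p\der x
\ \le\ \ip{(\Lambda_\sigma-\Lambda_\gamma)(f_\tau)}{f_\tau}
\ \le\ \int_\Omega(\sigma-\gamma)|\nabla u_\tau|^p\der x
\]
involve only the explicit test function, and the left-hand bound is exactly the lower bound your plan lacks: choosing $\gamma=\sigma(x_0)-2\eps$ makes the slab integrand nonnegative, while the contribution of $\Omega\setminus S_t$ is $O(\tau^p)$, negligible against $C(\tau)\gtrsim\tau^pe^{p\tau(t'-t)}$. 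With this lemma your two-sided asymptotics and ratio argument do close the proof, essentially reproducing the paper's estimates. Note also that the paper avoids computing $C(\tau)$ altogether: for fixed $t<t_0$ it shows that the indicator $I_\gamma(t,\tau)=\tau^{-p}\ip{(\Lambda_\sigma-\Lambda_\gamma)(f_\tau)}{f_\tau}$ tends to $+\infty$ when $\gamma<\sigma(x_0)$ and to $-\infty$ when $\gamma>\sigma(x_0)$, so that $\sigma(x_0)=\inf\{\gamma>0:\ I_\gamma(t,\tau)\to-\infty\}$ is read off from $\Lambda_\sigma$ by a sign test; this needs only the one-sided estimates above and no exact leading coefficient. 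Until you replace your comparison-estimate step by the monotonicity inequality (or an equivalent variational inequality), the proposal is incomplete at its decisive point.
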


We now give some more details and introductory remarks on the monotonicity and the enclosure
method. The monotonicity method is based on the monotonicity relation
\begin{equation}\label{Monotonicity}
\sigma_1 \geq \sigma_0 \ \implies \ \Lambda_{\sigma_1}
 \geq \Lambda_{\sigma_0}.
 \end{equation}  
On the left hand side of (\ref{Monotonicity}), $\sigma_1\geq \sigma_0$ is to be understood pointwise almost everywhere.
In the case $p=2$, the DN maps are linear, and $\Lambda_{\sigma_1} \geq \Lambda_{\sigma_0}$ can be understood in the
sense of operator definiteness (the Loewner partial order). Monotonicity relations such as (\ref{Monotonicity}) allow to constructively determine inclusions by choosing a small test set $B$ and a contrast level $\alpha$ and checking whether the DN map for the test conductivity $1+\alpha \chi_B$ is larger or smaller than $\Lambda_\sigma$. This approach has been proposed and numerically tested by Tamburrino and Rubinacci~\cite{Tamburrino:2006,Tamburrino:Rubinacci:2002}. To show that test sets $B$ outside the true inclusion will not give false positive results in the monotonicity method, one requires a non-trivial converse of the implication~\eqref{Monotonicity} which has been shown 
by Harrach and Ullrich \cite{Harrach:Ullrich:2013} using the concept of localized potentials \cite{Gebauer:2008}. 
Monotonicity-based arguments have been used to prove theoretical uniqueness results in \cite{Harrach:2009,Harrach:Seo:2010,Harrach:2012,Arnold:Harrach:2013,Harrach:Ullrich:2015,Harrach:Ullrich:2017},
and several recent works study monotonicity-based reconstruction methods, cf.\, e.g.,  
\cite{Harrach:Lee:Ullrich:2015,Su:Tamburrino:Ventre:Udpa:Udpa:2015,Tamburrino:Su:Lei:Udpa:Udpa:2015,Garde:Staboulis:2016,Harrach:Minh:2016,Maffucci:Vento:Ventre:Tamburrino:2016,Tamburrino:Sua:Ventre:Udpa:Udpa:2016}.

In this work, we utilize that for $p\neq 2$, the monotonicity relation~\eqref{Monotonicity} is still valid when $\Lambda_{\sigma_1} \geq \Lambda_{\sigma_0}$ is understood in the sense of a preorder defined by the associated quadratic forms, see lemma~\ref{lemma:mono} and the beginning of section~\ref{sec:monotonicity}.
Using special Wolff solutions we can then prove a converse of the implication~\eqref{Monotonicity}, showing that the union of measure theoretic interiors of balls $B\in \mathcal B$ marked by the monotonicity method generates the essential convex hull of the inclusion,
\[
\cessco(\esssupp(|\sigma-1|))= \cessco\left( \bigcup_{B\in \mathcal B} B^\circ \right).
\]
See theorem~\ref{thm:monotonicity_method} and corollary~\ref{cor:mono}.

The enclosure method uses specific exponential solutions, or complex geometrical optics  (CGO) type solutions, for the conductivity equation as test functions to detect the convex hull of the inclusion~$D$. This method is based on analyzing the behaviour of the indicator function, defined via the DN map and the exponential solutions, to see whether or not the level set of the phase function touches the boundary of the inclusion. The indicator function is given by 
\[
I(t,\tau) = \tau^{-p}\int_{\doo\Omega}(\Lambda_{\sigma}-\Lambda_{1})(f_{\tau}) f_{\tau} \,dS,
\] 
where $t$ and $\tau$ are parameters and $f_{\tau}$ are boundary values of the exponential Wolff solutions given in Section~\ref{Sec:prelim}. Now consider a half-space $H\subset\mathbb{R}^n$ so that the energy of the Wolff solution concentrates in $H$ when the parameter~$\tau$ becomes large. Then we observe the following facts for the indicator function. For large $\tau$, if $H$ does not meet $D$ in a set of positive measure, then
\[
I(t,\tau) \rightarrow 0
\]
and if $H$ meets $D$ in a set of positive measure, then we have 
\[
\abs{I(t,\tau)} \rightarrow \infty.
\]
Many earlier works on the enclosure method consider hyperplanes which barely touch the inclusion~$D$, whereas in this work we consider the situations where, roughly, either~$H \cap D$ has positive measure or $\dist\sulut{H,D}>0$.

We prove our main result, theorem~\ref{cor:main}, with both the monotonicity and the enclosure method. The enclosure method for linear equations allows one to identify various shapes by using CGO solutions with different phase functions. See for instance~\cite{Ikehata:1999:oct, Sini:Yoshida:2012}, which use linear phase functions to determine the convex hull, and \cite{Nakamura:Yoshida:2007} where spherical phase functions are used to reconstruct non-convex parts of the obstacle.
The work~\cite{Nagayasu:Uhlmann:Wang:2011} uses the enclosure method with CGO solutions with polynomial phases, where the energy is concentrated inside a cone, and in this case it is possible to approximate the exact shape of certain types of obstacles.
In~\cite{Dos:Kenig:Salo:Uhlmann:2009}, the authors propose a set of CGO solutions for the linear Schr\"{o}dinger equation with all possible different phases.
So, using all of these different phases, it might be possible to approximate the obstacle up to some obstructions.
Therefore, the enclosure method heavily depends on the CGO solutions and their blow-up properties on a specific region depending on the phase functions we choose. 

On the other hand, monotonicity based shape reconstruction methods depend on constructing solutions which blow up in suitable regions.
Harrach~\cite{Gebauer:2008} was able to produce such solutions for the linear conductivity equation.
These solutions are known as localized potentials, and they can be used to determine the exact shape of the inclusion also in certain cases where the inclusion has indefinite sign.
However, the existence of such solutions is proved by linear functional analysis (this involves a duality argument and the unique continuation principle), and it is not known if localized potentials exist in the present nonlinear model. We will replace the localized potentials by Wolff type solutions for the $p$-Laplace equation. These solutions will have very large energy on one side of a given hyperplane, with very small energy on the other side. We will use this phenomenon to establish a version of the monotonicity method in the nonlinear case.

In the present work we implement these two methods to reconstruct the essential convex hull of the inclusion.
This is the first application of monotonicity based shape reconstruction methods to nonlinear equations; see theorem~\ref{thm:monotonicity_method}.
We reach the same conclusion with the enclosure method; see theorem~\ref{thm:enclosure_method}.
In contrast to many previous works, we do not assume any regularity assumptions on the boundary of the inclusion or any jump conditions for the conductivity. 

Unlike for the linear case, comparatively little is known for inverse problems related to the $p$-Laplace equation. The first boundary determination result is due to Salo and Zhong~\cite{Salo:Zhong:2012}, and boundary determination for the normal derivative of the conductivity was shown by Brander~\cite{Brander:2016:jan}.
Recently, under monotonicity assumptions on the conductivity, an interior uniqueness result has been given by Guo, Kar and Salo~\cite{Guo:Kar:Salo:2016}.
This result is not constructive, but it is not restricted to constant background conductivity.
Brander-Kar-Salo~\cite{Brander:Kar:Salo:2015} detect the convex hull of an inclusion $D$ when the conductivity~$\sigma$ satisfies $\sigma = 1$ in $\Omega\setminus\overline{D}$ and $\sigma \geq 1+ \varepsilon>1$ in~$D$ or $\sigma < 1 - \eps$ in~$D$.
In particular, the conductivity $\sigma$ has a jump discontinuity along the interface $\doo D$. The work~\cite{Brander:Ilmavirta:Kar:2015} considers the inclusion detection problem for models with zero or infinite conductivity. For an introduction to the $p$-Calder\'on problem see the thesis of Brander~\cite{Brander:2016:apr}.

This paper is organized as follows. In section~\ref{sec:prelim}, we discuss the essential convex hull and Wolff solutions required for the main results. Section~\ref{sec:monotonicity} introduces the monotonicity based shape reconstruction method for the $p$-Laplace equation.
In section~\ref{sec:enclosure}, we justify the enclosure method under the present assumptions.
Finally, we establish the boundary determination result for strictly convex domains in section~\ref{sec:boundary}.

\subsection*{Acknowledgements}

T.B.\ and M.S.\ were supported by the Academy of Finland (Centre of Excellence in Inverse Problems Research), and M.K.\ and M.S.\ were also supported by an ERC Starting Grant (grant agreement no~307023).

\section{Notations and preliminaries}
\label{Sec:prelim}  \label{sec:prelim}


Throughout this work let $\Omega \subset \R^n$ be open and bounded and suppose that the conductivity satisfies 
$\sigma \in L^\infty_+ \sulut{\Omega;\R} := \joukko{f \in L^\infty(\Omega;\R); \essinf f > 0}$.
We can assume that $\Omega$ is connected without any loss of generality.

\subsection{Essential support and convexity}
\label{subsec:esssuppconvex}

We summarize some basic facts on the (essential) support and its (closed essential) convex hull.
Here and in the following, measure theoretical terms (such as measurable, almost everywhere, or null set) are always used with respect to the $n$-dimensional Lebesgue measure, which we denote by $m$.
For elementary facts concerning convexity and half-spaces see for example~\cite{Rockafellar:1970}.

\begin{definition}[Essential support]
The (essential) support $\esssupp f$ of a measurable function $f:\ \Omega\to \R$ is the complement of the union of all
open sets $O\subseteq \Omega$ where $f|_O=0$ almost everywhere.
\end{definition}

\begin{definition}[Measure theoretic interior]
The measure theoretic interior $A^\circ$ of a measurable set $A$ is the set of all points $x\in A$ with density $1$, i.e. 
\[
\lim_{\epsilon\to 0} \frac{m(A\cap B_\epsilon(x))}{m(B_\epsilon(x))}=1.
\]
\end{definition}

\begin{definition}[Closed half space]\label{def:closed_half_space}
For $\rho\in \R^{n}$ and $t\in \R$, we define the closed half space
\[
H_{\rho,t}:=\left\{ x \in \R^n:\ x \cdot \rho \leq t\right\}.
\]
\end{definition}

\begin{definition}[Closed essential convex hull]\label{def:cessco}
For a measurable set $A \subset \R^n$, the (closed essential) convex hull $\cessco A$ is the intersection of all closed half-spaces $H_{\rho,t}$ ($\rho\in \R^n$, $t\in \R$) such that
\begin{equation}
m\sulut{A \setminus H_{\rho,t}} = 0.
\end{equation}
\end{definition}

\begin{definition}[Essential convex support function] \label{def_essential_support_function}
If $A \subseteq \R^n$ is a bounded measurable set with positive measure, then we define its essential convex support function~$h_A:\ S^{n-1} \to \R$ of~$A$ by 
\begin{equation}
h_A(\rho) = \inf \joukko{t \in \R: \  m\sulut{A \setminus H_{\rho,t}}=0} .
\end{equation}
\end{definition}
Since $A$ has positive measure, the infimum is taken over a non-empty set.
The boundedness of $A$ ensures that the set $\joukko{t \in \R: \  m\sulut{A \setminus H_{\rho,t}}=0}$ is bounded from below.
We have
\begin{align*}
t \ge h_A(\rho) &\implies m(A \setminus \{ x \cdot \rho \leq t \}) = 0, \\
t < h_A(\rho) &\implies m(A \setminus \{ x \cdot \rho \leq t \}) > 0.
\end{align*}

\begin{lemma}\label{lemma:convexhullsupportproperties}
\begin{enumerate}[(a)]
\item For a measurable set $A$, $\cessco A$ is a closed and convex set.
\item If $A$ is measurable, then the Lebesgue density of $A$ is zero for all points $x\in A\setminus \cessco(A)$. In particular,
\[
A^\circ\subseteq \cessco(A) \quad \text{ and } \quad A\subseteq \cessco(A) \cup N \quad \text{ with a null set $N$.}
\]
\item For measurable sets $A,B\subseteq \R^n$ and a null set $N\subseteq \R^n$,
\[
B\subseteq \cessco(A)\cup N \quad \text{ implies } \quad \cessco(B)\subseteq \cessco(A).
\]
\item For a measurable function $f:\ \Omega\to \R$, $\cessco(\esssupp f)$ is
the intersection of all closed halfspaces~$H_{\rho,t}$ with $f|_{\Omega\setminus H_{\rho,t}}=0$ almost everywhere.
\end{enumerate}
\end{lemma}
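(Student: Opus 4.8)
The four parts are essentially independent, and my plan rests on two ingredients only: the description of $\cessco A$ as an intersection of closed half-spaces in Definition~\ref{def:cessco}, and the Lebesgue density theorem. Part (a) is immediate: each $H_{\rho,t}$ is closed and convex, and an arbitrary intersection of closed convex sets is again closed and convex (using the conventions that the empty intersection is $\R^n$ and that $\emptyset$ counts as convex), so no computation is required.

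For part (b) the key is a separation argument. Given $x \in A \setminus \cessco(A)$, the fact that $x$ lies outside the intersection defining $\cessco(A)$ forces the existence of at least one admissible half-space $H_{\rho,t}$ (with $\rho \neq 0$) such that $m(A \setminus H_{\rho,t}) = 0$ and $x \cdot \rho > t$. For all sufficiently small $\epsilon > 0$ the ball $B_\epsilon(x)$ then lies in the open half-space $\R^n \setminus H_{\rho,t}$, so $A \cap B_\epsilon(x) \subseteq A \setminus H_{\rho,t}$ is a null set and the density of $A$ at $x$ is $0$. Since points of $A^\circ$ have density $1$, this gives $A^\circ \subseteq \cessco(A)$; and since the Lebesgue density theorem makes the density equal to $1$ at almost every point of $A$, the set $N := A \setminus \cessco(A)$—where the density is everywhere $0$—must be null, yielding $A \subseteq \cessco(A) \cup N$.

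For part (c) I would show that every half-space admissible for $A$ is also admissible for $B$. Fix $H = H_{\rho,t}$ with $m(A \setminus H) = 0$; then $\cessco(A) \subseteq H$, so the hypothesis $B \subseteq \cessco(A) \cup N$ gives $B \setminus H \subseteq N$ and hence $m(B \setminus H) = 0$. Thus $H$ appears in the intersection defining $\cessco(B)$, so $\cessco(B) \subseteq H$; intersecting over all such $H$ gives $\cessco(B) \subseteq \cessco(A)$. For part (d), writing $Z$ for the union of the open sets on which $f$ vanishes a.e., so that $\esssupp f = \Omega \setminus Z$, the crux is the equivalence, for each closed half-space,
\[
m\big(\esssupp f \setminus H_{\rho,t}\big) = 0
\quad\Longleftrightarrow\quad
f|_{\Omega \setminus H_{\rho,t}} = 0 \ \text{almost everywhere};
\]
once this is established the two defining families of half-spaces coincide and the two intersections agree by Definition~\ref{def:cessco}. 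For the backward direction, $\Omega \setminus H_{\rho,t}$ is open, so if $f$ vanishes a.e.\ there it is contained in $Z$, whence $\esssupp f$ is disjoint from it and $\esssupp f \setminus H_{\rho,t}$ is empty. For the forward direction I split $\Omega \setminus H_{\rho,t}$ into its intersection with $Z$, where $f=0$ a.e.\ by construction, and its intersection with $\esssupp f$, which is contained in the null set $\esssupp f \setminus H_{\rho,t}$.

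I expect the only genuine subtlety to lie in part (d), in the assertion that $f$ vanishes almost everywhere on $Z$, which is a priori an \emph{uncountable} union of open sets. This is where I would invoke the second countability (Lindel\"of property) of $\R^n$ to reduce $Z$ to a countable subunion, on which the a.e.-vanishing is preserved. Everything else reduces to bookkeeping with the definitions and the Lebesgue density theorem.
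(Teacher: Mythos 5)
Your proposal is correct and follows essentially the same route as the paper's proof in all four parts: intersection of closed convex half-spaces for (a), the separation-plus-small-ball density argument with Lebesgue's density theorem for (b), transferring admissible half-spaces from $A$ to $B$ for (c), and the equivalence $m(\esssupp f \setminus H_{\rho,t})=0 \iff f|_{\Omega\setminus H_{\rho,t}}=0$ a.e.\ for (d). The only difference is that you make explicit the Lindel\"of/second-countability argument showing $f=0$ a.e.\ on the (a priori uncountable) union $Z$ of open sets where $f$ vanishes a.e.\ --- a standard fact that the paper uses silently --- which is a point in your favour rather than a deviation.
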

\begin{proof}
\begin{enumerate}[(a)]
\item
The convex hull~$\cessco A$ is an intersection of closed and convex half-spaces and thus closed and convex.

%
%

\item
Let $x\in A\setminus \cessco (A)$. By definition \ref{def:cessco} there exists a closed half space $H_{\rho,t}$ with $m(A\setminus H_{\rho,t})=0$ and $x\not\in H_{\rho,t}$. Since $H_{\rho,t}$ is closed, $B_\epsilon(x)\cap H_{\rho,t}=\emptyset$ for all sufficiently small balls $B_\epsilon(x)$, so that
\begin{align*}
m(A\cap B_\epsilon(x))&\leq m( (H_{\rho,t}\cap B_\epsilon(x) ) \cup (A\setminus H_{\rho,t}))\\
&\leq m( H_{\rho,t}\cap B_\epsilon(x) ) + m(A\setminus H_{\rho,t}) =0.
\end{align*}
This shows that the Lebesgue density of $A$ is zero in $x$.

In particular, $A^\circ\subseteq \cessco(A)$ and,
by the Lebesgue's density theorem, $m(A\setminus \cessco A)=0$. 
\item
Let $B\subseteq \cessco(A)\cup N$. Then $B$ is a subset of $H_{\rho,t}\cup N$ for every 
half-space $H_{\rho,t}$ with $m\sulut{A \setminus H_{\rho,t}} = 0$.
Hence, $m\sulut{B \setminus H_{\rho,t}} = 0$ for all half-spaces $H_{\rho,t}$ with $m\sulut{A \setminus H_{\rho,t}} = 0$, i.e.,
the set of halfspaces with $m\sulut{B \setminus H_{\rho,t}} = 0$ is a superset of those with $m\sulut{A \setminus H_{\rho,t}} = 0$.
Thus the intersection of all half-spaces with $m\sulut{B \setminus H_{\rho,t}} = 0$ is a subset of
the intersection of all half-spaces with $m\sulut{A \setminus H_{\rho,t}} = 0$, which shows $\cessco(B)\subseteq \cessco(A)$.

\item
By definition, $\cessco(\esssupp(f))$ is the intersection of all closed halfspaces~$H$ for which
$\esssupp(f)\setminus H$ is a null set. Hence, it suffices to show that a closed halfspace $H$ fulfills $m(\esssupp(f)\setminus H)=0$ if and only if $f|_{\Omega\setminus H}=0$ almost everywhere.

Let $O$ be the union of all open sets on which $f$ is zero almost everywhere. Then
\[
\esssupp(f)\setminus H=\left( \Omega\setminus O\right)\setminus H
= \Omega\setminus ( H\cup  O )=\left( \Omega\setminus H\right)\setminus O.
\]
If $m(\esssupp(f)\setminus H)=0$ then $\Omega\setminus H$ is a subset of $O\cup N$ with some null set~$N$,
so $f|_{\Omega\setminus H}=0$ almost everywhere. 
On the other hand, if $f|_{\Omega\setminus H}=0$ a.e.\ then $\Omega\setminus H\subseteq O \cup N$ where $N$ is a null set, and thus
$\left( \Omega\setminus H\right)\setminus O=\esssupp(f)\setminus H$ is a null set.
\end{enumerate}
\end{proof}


\subsection{Wolff solutions and monotonicity for the nonlinear DN map}
\label{subsec:DN}

Let $\Lambda_\sigma$ be the (nonlinear) Dirichlet-Neumann (DN) map for the weighted $p$-Laplace equation ($1<p<\infty$), or $p$-conductivity equation, i.e.\
\[
\Lambda_\sigma:\ X\to X'
\]
defined by 
\[
\left( \Lambda_\sigma(f),g\right) :=\int_\Omega \sigma |\nabla u_\sigma^f|^{p-2} \nabla u_\sigma^f \cdot \nabla v^g \der x, \qquad f, g \in X,
\]
where $X:=W^{1,p}(\Omega)/W_0^{1,p}(\Omega)$, $v^g\in W^{1,p}(\Omega)$ is any representative of the quotient space element $g\in X$, and
$u_\sigma^f$ is the weak solution of the weighted $p$-Laplacian with boundary value $f$, i.e.\ $u_\sigma^f$ is the unique minimizer in $f + W_0^{1,p}(\Omega)$ of the $p$-Dirichlet energy functional
\[
u\mapsto E_\sigma(u):=\int_\Omega \sigma |\nabla u|^p \der x.
\]
The map $\Lambda_\sigma$ is well-defined by this definition~\cite[section 3.2]{Brander:Ilmavirta:Kar:2015}.

We will use a certain exponential solution for the $p$-Laplace equation, which is real valued and periodic in one direction and exponentially behaving in the other direction. This specific solution is the Wolff solution, which was introduced by Wolff~\cite{Wolff:2007} and has applications in inverse problems including boundary determination~\cite{Brander:2016:jan, Salo:Zhong:2012} and inclusion detection~\cite{Brander:Ilmavirta:Kar:2015,Brander:Kar:Salo:2015}.
We explicitly write the solutions as follows (see~\cite[Lemma 3.1]{Brander:Kar:Salo:2015}).

\begin{lemma}[Wolff solutions]
Let $\rho, \rho^{\perp} \in \mathbb{R}^n$ satisfy $\abs{\rho} = \abs{\rho^{\perp}} = 1$ and $\rho \cdot \rho^{\perp} = 0$. Define $h \colon \R^n \to \R$ by $h(x) = e^{-\rho \cdot x}w(\rho^\perp \cdot x)$, where the function~$w$ satisfies the differential equation
\begin{equation}\label{eq:wolff}
w''(s) + V(w,w')w = 0
\end{equation}
with
\begin{equation}
V(w,w') = \frac{(2p-3)\left(w'\right)^2+(p-1)w^2}{(p-1)\left(w'\right)^2 + w^2},
\end{equation}
The function $h$ is then $p$-harmonic.

Given any initial conditions $(a_0,b_0) \in \R^2 \setminus \{(0,0)\}$ there exists a solution~$w \in C^\infty(\R)$ to the differential equation~\eqref{eq:wolff} which is periodic with period $\lambda_p > 0$, satisfies the initial conditions $(w(0),w'(0)) = (a_0,b_0)$ and $\int_0^{\lambda_p} w(s) \,ds = 0$.
Furthermore, there exist constants $c$ and $C$ depending on $a_0$, $b_0$ and $p$ such that for all $s \in \R$ we have
\begin{equation}\label{eq:wolff_new}
C > (w(s))^2+(w'(s))^2 > c > 0.
\end{equation}
\end{lemma}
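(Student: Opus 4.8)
The plan is to prove the two assertions separately: first the pointwise verification that $h$ is $p$-harmonic once $w$ solves \eqref{eq:wolff}, and then the qualitative analysis of the planar ODE giving existence, periodicity and the two-sided bound. For the $p$-harmonicity I would simply compute. Writing $r=\rho\cdot x$ and $s=\rho^\perp\cdot x$, the orthonormality of $\rho,\rho^\perp$ gives $\nabla h = e^{-r}\big(-w(s)\rho + w'(s)\rho^\perp\big)$, so $\abs{\nabla h}^2 = e^{-2r}\big(w^2+(w')^2\big)$ and
\[
\abs{\nabla h}^{p-2}\nabla h = e^{-(p-1)r}\big(w^2+(w')^2\big)^{(p-2)/2}\big(-w\rho + w'\rho^\perp\big).
\]
Taking the divergence (the $\rho$-derivative hits only the exponential, the $\rho^\perp$-derivative only the $s$-dependent factors) and dividing out $e^{-(p-1)r}>0$, the equation $\div(\abs{\nabla h}^{p-2}\nabla h)=0$ reduces to a second-order ODE for $w$; collecting the coefficients of $w''$ and of $w$ shows this ODE is exactly \eqref{eq:wolff}. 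This step is routine algebra and I expect no difficulty.

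The substance is the existence/periodicity/boundedness claim. I would pass to the phase plane, writing \eqref{eq:wolff} as the first-order system $w'=v$, $v'=-V(w,v)w$. Since $V$ is homogeneous of degree $0$ and smooth off the origin, $Vw$ is Lipschitz near the origin with value $0$ there, so the field is locally Lipschitz on $\R^2$ and vanishes only at the origin; Picard--Lindel\"of then gives a unique local solution through any $(a_0,b_0)\neq(0,0)$ that can never reach the origin. The decisive step is to introduce polar coordinates $w=R\cos\theta$, $v=R\sin\theta$ and exploit the degree-$0$ homogeneity of $V$. A short computation using the algebraic identity $Vw^2+v^2=(p-1)(w^2+v^2)^2/\big((p-1)v^2+w^2\big)$ collapses the angular equation to $\dot\theta = -(p-1)/\big(\cos^2\theta+(p-1)\sin^2\theta\big)$, a strictly negative function of $\theta$ alone, and the radial equation to $\frac{d}{d\theta}\ln R = \frac{p-2}{2(p-1)}\sin 2\theta$. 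Integrating the latter yields the conserved quantity $E=\ln R + \frac{p-2}{4(p-1)}\cos 2\theta$.

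With this decoupling everything follows. Conservation of $E$ together with $\abs{\cos 2\theta}\le 1$ pins $R$ between two positive constants, giving global existence and the bound \eqref{eq:wolff_new} with explicit $c,C$ depending on $a_0,b_0,p$ (note $w^2+(w')^2=R^2$). Since $\theta$ decreases monotonically and both $\dot\theta$ and $R$ are functions of $\theta$ alone, the orbit returns to its starting point exactly when $\theta$ has decreased by $2\pi$, and the elapsed time $\lambda_p=\frac{1}{p-1}\int_0^{2\pi}\big(\cos^2\theta+(p-1)\sin^2\theta\big)\,d\theta$ is the same for every orbit, hence a common period; smoothness of $w$ comes from bootstrapping the smooth ODE away from the origin. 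Finally, changing variables from $s$ to $\theta$ turns $\int_0^{\lambda_p}w\,ds$ into $\frac{1}{p-1}\int_0^{2\pi}R(\theta)\cos\theta\,\big(\cos^2\theta+(p-1)\sin^2\theta\big)\,d\theta$, whose integrand is odd under the measure-preserving involution $\theta\mapsto\pi-\theta$ (which fixes $\cos 2\theta$, hence $R$, and flips $\cos\theta$); the integral therefore vanishes, giving the zero-mean condition for free.

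The main obstacle --- really the only nontrivial idea --- is spotting the homogeneity-driven decoupling: that in polar coordinates $\dot\theta$ and $d\ln R/d\theta$ depend on $\theta$ only, which hands us the explicit first integral $E$. Everything after that (boundedness, a trajectory-independent period, and zero mean by symmetry) is a direct consequence. Equivalently one can phrase the same computation as finding the integrating factor $(w^2+v^2)^{-2}$ for the phase-plane equation, but the polar form makes the periodicity and the vanishing of the mean most transparent.
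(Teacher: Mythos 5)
Your argument is correct --- I verified the key computations: the reduction of $\mathrm{div}(|\nabla h|^{p-2}\nabla h)=0$ to $\bigl(w^2+(p-1)(w')^2\bigr)w''+\bigl((p-1)w^2+(2p-3)(w')^2\bigr)w=0$, the polar-coordinate identities
\[
\dot\theta=-\frac{p-1}{\cos^2\theta+(p-1)\sin^2\theta},\qquad \frac{d}{d\theta}\ln R=\frac{p-2}{2(p-1)}\sin 2\theta,
\]
the resulting first integral $E=\ln R+\tfrac{p-2}{4(p-1)}\cos 2\theta$, and the vanishing of $\int_0^{\lambda_p}w\,ds$ under the involution $\theta\mapsto\pi-\theta$ all check out. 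Note, however, that the paper itself does not prove this lemma: it quotes it from Brander--Kar--Salo (Lemma 3.1 there), which in turn rests on Wolff's original construction, so your proposal is a self-contained substitute for a citation rather than a parallel to an in-paper argument. Relative to that cited route, which is likewise a phase-plane analysis in spirit, your homogeneity-driven decoupling delivers concrete extras: an explicit, trajectory-independent period $\lambda_p=\tfrac{1}{p-1}\int_0^{2\pi}\bigl(\cos^2\theta+(p-1)\sin^2\theta\bigr)\,d\theta=\tfrac{p\pi}{p-1}$ (correctly equal to $2\pi$ when $p=2$, where the ODE degenerates to $w''+w=0$), explicit constants $c,C$ in \eqref{eq:wolff_new} expressed through the conserved quantity $E$, and the zero-mean property by a short symmetry argument rather than by tracking zeros of $w$. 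Two points worth making explicit in a write-up: first, global existence of $w$ on all of $\R$ (needed before periodicity even makes sense) follows from the two-sided bound on $R$, or alternatively from the linear growth of the degree-one homogeneous vector field; second, it is the Picard--Lindel\"of uniqueness you invoke at the start that upgrades ``the orbit closes after $\theta$ decreases by $2\pi$'' to genuine $\lambda_p$-periodicity of $s\mapsto(w(s),w'(s))$.
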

For a large parameter $\tau \in \R$ and a fixed constant $t \in \R$, we now define the Wolff type solutions $u_\tau \colon \R^n \to \R$ by
\begin{equation}\label{eq:real_test}
u_\tau(x) = e^{\tau(x \cdot \rho - t)} w\left(\tau x \cdot \rho^{\perp}\right).
\end{equation}
The upper and lower bounds for the Wolff solution are due to \cite[lemma~3.1 and equations~(3.5) and (3.6)]{Brander:Kar:Salo:2015}:
\begin{lemma}\label{lemma:Wolff}
Let $\Omega \subset \R^n$ be a bounded open set. There exist $c,C>0$ so that, for each $\rho \in S^{n-1}$, $\tau>0$, and $t \in \R$, the function \eqref{eq:real_test} solves $\mathrm{div}(|\nabla u|^{p-2} \nabla u) = 0$ in $\R^n$, satisfies $u_{\tau}|_{\Omega} \in C^{\infty}(\ol{\Omega}) \subseteq W^{1,p}(\Omega)$, and 
\[
c \tau e^{\tau (x\cdot \rho-t)} \leq |\nabla u_\tau(x)| \leq C \tau e^{\tau (x\cdot \rho-t)}, \quad  x \in \Omega.
\]
\end{lemma}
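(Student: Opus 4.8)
The plan is to reduce everything to the preceding lemma on Wolff solutions, which already supplies the function $w \in C^\infty(\R)$, its two-sided bound \eqref{eq:wolff_new}, and the $p$-harmonicity of $h(x) = e^{-\rho\cdot x} w(\rho^\perp\cdot x)$. There are three things to check for the rescaled function $u_\tau$ of \eqref{eq:real_test}: that it is $p$-harmonic on all of $\R^n$, that its restriction lies in $C^\infty(\ol\Omega)\subseteq W^{1,p}(\Omega)$, and the uniform gradient bounds. The only genuinely structural point is the first; the gradient bounds will fall out of a single exact identity, and the membership is immediate from smoothness of $w$ and compactness of $\ol\Omega$.

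For $p$-harmonicity I would avoid recomputing and instead exploit the invariances of the equation $\div(|\nabla u|^{p-2}\nabla u)=0$. Let $S(x)=x-2(x\cdot\rho)\rho$ be the orthogonal reflection across the hyperplane $\{x\cdot\rho=0\}$, so that $\rho\cdot Sx = -(x\cdot\rho)$ and $\rho^\perp\cdot Sx = x\cdot\rho^\perp$. A direct substitution then gives $u_\tau(x) = e^{-\tau t}\, h(\tau S x)$. Since the $p$-Laplace operator is invariant under orthogonal transformations and under the dilation $x\mapsto\tau x$ (with $\tau>0$), and since any nonzero constant multiple of a solution is again a solution by homogeneity, the $p$-harmonicity of $h$ from the preceding lemma transfers directly to $u_\tau$.

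For the gradient, writing $\nabla(x\cdot\rho)=\rho$ and $\nabla(x\cdot\rho^\perp)=\rho^\perp$ yields
\[
\nabla u_\tau(x) = \tau e^{\tau(x\cdot\rho-t)}\bigl(w(\tau x\cdot\rho^\perp)\,\rho + w'(\tau x\cdot\rho^\perp)\,\rho^\perp\bigr),
\]
and orthonormality of $\rho,\rho^\perp$ gives the exact identity
\[
|\nabla u_\tau(x)| = \tau e^{\tau(x\cdot\rho-t)}\sqrt{w(\tau x\cdot\rho^\perp)^2+w'(\tau x\cdot\rho^\perp)^2}.
\]
Inserting the bound \eqref{eq:wolff_new} and replacing $c,C$ by $\sqrt c,\sqrt C$ gives the claimed inequalities. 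I expect the main point to watch --- rather than a real obstacle --- to be the uniformity of $c$ and $C$ in $\rho$, $\tau$ and $t$: this is exactly why I would isolate the identity above, since the normalized gradient equals $\sqrt{w^2+(w')^2}$ with no dependence on these parameters, so the constants of \eqref{eq:wolff_new} (which depend only on the fixed initial data and on $p$) serve uniformly. Finally, $w\in C^\infty(\R)$ and the exponential factor make $u_\tau\in C^\infty(\R^n)$, and restricting to the compact set $\ol\Omega$ gives $u_\tau|_\Omega\in C^\infty(\ol\Omega)\subseteq W^{1,p}(\Omega)$.
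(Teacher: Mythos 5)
Your proposal is correct and takes essentially the same route as the paper, which imports this lemma from Brander--Kar--Salo: $p$-harmonicity of $u_\tau$ is inherited from the preceding lemma via the invariance of $\mathrm{div}(|\nabla u|^{p-2}\nabla u)=0$ under orthogonal maps, dilations and scalar multiples, and the two-sided bound follows from the exact identity $|\nabla u_\tau(x)|=\tau e^{\tau(x\cdot\rho-t)}\sqrt{w(\tau x\cdot\rho^\perp)^2+w'(\tau x\cdot\rho^\perp)^2}$ combined with \eqref{eq:wolff_new}, which indeed makes the constants uniform in $\rho$, $\tau$ and $t$. As a minor simplification, the reflection $S$ is not needed: since the preceding lemma allows any orthonormal pair, applying it with $(-\rho,\rho^\perp)$ gives $u_\tau(x)=e^{-\tau t}h(\tau x)$ directly.
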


We will also use the monotonicity inequality~\cite[Lemma 2.1]{Brander:Kar:Salo:2015}.

\begin{lemma}[Monotonicity inequality] \label{lemma:mono}
Let $\sigma_0,\sigma_1\in L_+^\infty(\Omega)$, $1<p<\infty$. For every $f\in W^{1,p}(\Omega)$ and corresponding minimizer
$u_0\in f + W_0^{1,p}(\Omega)$ of the energy~$E_{\sigma_0}$, it holds that
\begin{equation}
\begin{split}
(p-1)\int_\Omega \frac{\sigma_0}{\sigma_1^{1/(p-1)}} \left( \sigma_1^\frac{1}{p-1} - \sigma_0^\frac{1}{p-1}\right) |\nabla u_0|^p \der x\\
\leq \left( (\Lambda_{\sigma_1}-\Lambda_{\sigma_0}) f,f\right) \leq \int_\Omega (\sigma_1-\sigma_0) |\nabla u_0|^p \der x.
\end{split}
\end{equation}
\end{lemma}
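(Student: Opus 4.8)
The plan is to translate both inequalities into a comparison of Dirichlet energies. Write $u_0$ and $u_1$ for the minimizers of $E_{\sigma_0}$ and $E_{\sigma_1}$ in $f + W_0^{1,p}(\Omega)$. Taking $g = f$ in the definition of the DN map gives $(\Lambda_\sigma f, f) = \int_\Omega \sigma |\nabla u_\sigma^f|^p \der x = E_\sigma(u_\sigma^f)$, so that $((\Lambda_{\sigma_1}-\Lambda_{\sigma_0})f, f) = E_{\sigma_1}(u_1) - E_{\sigma_0}(u_0)$. The upper bound is then immediate: since $u_0 \in f + W_0^{1,p}(\Omega)$ is an admissible competitor for $E_{\sigma_1}$, minimality of $u_1$ yields $E_{\sigma_1}(u_1) \le E_{\sigma_1}(u_0) = \int_\Omega \sigma_1 |\nabla u_0|^p \der x$, and subtracting $E_{\sigma_0}(u_0) = \int_\Omega \sigma_0 |\nabla u_0|^p \der x$ produces exactly $\int_\Omega (\sigma_1 - \sigma_0)|\nabla u_0|^p \der x$.

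For the lower bound I would bound the minimum $E_{\sigma_1}(u_1)$ from below by a pointwise argument rather than by comparison. The idea is to establish a pointwise inequality
\[
\sigma_1 |\xi|^p \ge p\,\sigma_0 |\eta|^{p-2}\,\eta\cdot\xi - (p-1)\frac{\sigma_0^{p/(p-1)}}{\sigma_1^{1/(p-1)}}\,|\eta|^p
\]
valid for all $\xi,\eta \in \R^n$ (with $\sigma_0,\sigma_1$ the pointwise conductivity values). Applying this with $\xi = \nabla u_1$ and $\eta = \nabla u_0$ and integrating gives a lower bound on $E_{\sigma_1}(u_1)$ in which the cross term is $p\int_\Omega \sigma_0 |\nabla u_0|^{p-2}\nabla u_0 \cdot \nabla u_1 \der x$. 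Since $u_1 - u_0 \in W_0^{1,p}(\Omega)$, the weak Euler--Lagrange condition for the $E_{\sigma_0}$-minimizer $u_0$ forces $\int_\Omega \sigma_0 |\nabla u_0|^{p-2}\nabla u_0 \cdot \nabla(u_1-u_0)\der x = 0$, so that cross term collapses to $p\,E_{\sigma_0}(u_0)$. Combining the pieces yields $E_{\sigma_1}(u_1) - E_{\sigma_0}(u_0) \ge (p-1)\int_\Omega \left( \sigma_0 - \frac{\sigma_0^{p/(p-1)}}{\sigma_1^{1/(p-1)}} \right)|\nabla u_0|^p \der x$, and factoring the bracket as $\frac{\sigma_0}{\sigma_1^{1/(p-1)}}\bigl(\sigma_1^{1/(p-1)} - \sigma_0^{1/(p-1)}\bigr)$ reproduces the claimed lower bound.

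The one genuinely nontrivial step is the pointwise inequality with exactly the right constant, and this is where I expect the main difficulty to lie. I anticipate it is Young's inequality $\alpha\cdot\beta \le \frac{|\alpha|^p}{p} + \frac{|\beta|^{p'}}{p'}$ (conjugate exponent $p' = \frac{p}{p-1}$) applied to the specific split $\alpha = \sigma_1^{1/p}\xi$ and $\beta = \sigma_0 \sigma_1^{-1/p}|\eta|^{p-2}\eta$: then $|\alpha|^p = \sigma_1 |\xi|^p$, while $|\beta|^{p'} = \sigma_0^{p/(p-1)}\sigma_1^{-1/(p-1)}|\eta|^p$ and $p/p' = p-1$, so multiplying through by $p$ recovers the displayed inequality. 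The delicate point is choosing this particular split so that the $|\xi|^p$ coefficient comes out precisely as $\sigma_1$ (matching $E_{\sigma_1}$) while the remainder carries the exact weight $\sigma_0^{p/(p-1)}\sigma_1^{-1/(p-1)}$; once the split is guessed the verification is routine. As a sanity check on the algebra I would confirm that for $p=2$ the whole estimate reduces to the classical linear monotonicity inequality $\int_\Omega \frac{\sigma_0}{\sigma_1}(\sigma_1-\sigma_0)|\nabla u_0|^2 \der x \le ((\Lambda_{\sigma_1}-\Lambda_{\sigma_0})f,f) \le \int_\Omega (\sigma_1-\sigma_0)|\nabla u_0|^2 \der x$.
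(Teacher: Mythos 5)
Your proof is correct: the identification $\bigl((\Lambda_{\sigma_1}-\Lambda_{\sigma_0})f,f\bigr)=E_{\sigma_1}(u_1)-E_{\sigma_0}(u_0)$, the minimality comparison for the upper bound, and the Young's-inequality-plus-Euler--Lagrange argument for the lower bound (with the cross term $p\int_\Omega \sigma_0|\nabla u_0|^{p-2}\nabla u_0\cdot\nabla u_1 \,\der x$ collapsing to $p\,E_{\sigma_0}(u_0)$ because $u_1-u_0\in W_0^{1,p}(\Omega)$) all check out, including the exponent bookkeeping in the split $\alpha=\sigma_1^{1/p}\xi$, $\beta=\sigma_0\sigma_1^{-1/p}|\eta|^{p-2}\eta$. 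Note that this paper does not prove the lemma at all but quotes it from Brander--Kar--Salo (2015, Lemma 2.1), and your argument is essentially the standard proof given in that reference, so there is nothing to flag.
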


\section{Monotonicity method for the $p$-Laplacian}
\label{Sec:monotonicity} \label{sec:monotonicity}

As above let $\sigma\in L^\infty_+(\Omega)$. We make the global monotonicity assumption that
either $\sigma(x)\geq 1$ holds (almost everywhere) in $\Omega$ or 
$\sigma(x)\leq 1$ holds (almost everywhere) in $\Omega$. 

We will show that the essential convex hull of the support of $|\sigma-1|$ is uniquely determined by
the DN~operator~$\Lambda_\sigma$ and that it can be recovered by monotonicity tests.

The monotonicity tests are based on comparing $\Lambda_\sigma$ to certain nonlinear test operators 
in the following sense. For two (possibly nonlinear) operators $\Lambda_1,\Lambda_2: X\to X'$ we write that 
\[
\Lambda_1\geq \Lambda_2\quad \text{ if } \quad \left( (\Lambda_1(f)-\Lambda_2(f)),f\right)\geq 0
\quad \text{ for all $f\in X$.}
\]
Note that this defines a reflexive and transitive relation (a preorder), but not necessarily a partial order since antisymmetry may fail.

Let $\Lambda_0$ denote the DN~operator for the homogeneous conductivity~$\sigma_0=1$.
For a measurable set~$B\subset \Omega$, we also introduce the operator $\Lambda_B:\ X\to X'$ by
\[
\left(\Lambda_B(f),g\right):=\int_B |\nabla u_0^f|^{p-2} \nabla u_0^f \cdot \nabla u_0^g \,\der x, \qquad f, g \in X,
\]
where $u_0^h$ is the unique minimizer in $h + W_0^{1,p}(\Omega)$ of the $p$-Dirichlet energy functional $E_{\sigma_0}$ with $\sigma_0=1$.

\begin{thm}\label{thm:monotonicity_method}
Let $B\subseteq \Omega$ be a measurable set with positive measure. For every constant $\alpha>0$,
\begin{enumerate}[(a)]
\item
$
\sigma|_B\geq 1+ \alpha  \text{ implies } 
\Lambda_0+\tilde \alpha \Lambda_B\leq \Lambda_\sigma,
$
\item 
$
\sigma|_B\leq 1-\alpha  \text{ implies }  
\Lambda_0- \alpha \Lambda_B\geq \Lambda_\sigma,
$
\item 
$
\Lambda_0+\alpha \Lambda_B\leq \Lambda_\sigma  \mbox{ implies }  B\subseteq \cessco(\esssupp(|\sigma-1|)) \text{ up to null sets},
$
\item 
$
\Lambda_0-\alpha \Lambda_B\geq \Lambda_\sigma \mbox{ implies }  B\subseteq \cessco(\esssupp(|\sigma-1|)) \text{ up to null sets},
$
\end{enumerate}
where in (a) $\tilde \alpha:=(p-1)( 1- (1+\alpha)^{-\frac{1}{p-1}} )>0$, and in (c) and (d) the notion ''$B \subseteq A$ up to null sets'' means that $m(B \setminus A) = 0$.
\end{thm}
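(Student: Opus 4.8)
The plan is to split the statement into the two forward implications (a), (b), which I expect to follow directly from the monotonicity inequality (Lemma~\ref{lemma:mono}) together with the global sign assumption, and the two converse implications (c), (d), which will require the exponential blow-up of the Wolff solutions (Lemma~\ref{lemma:Wolff}). For (a) I would apply the lower bound in Lemma~\ref{lemma:mono} with $\sigma_0=1$ and $\sigma_1=\sigma$, so that the relevant minimizer is the homogeneous $u_0^f$ and $(\Lambda_B(f),f)=\int_B\abs{\nabla u_0^f}^p\der x$. The lower bound reads $(p-1)\int_\Omega(1-\sigma^{-1/(p-1)})\abs{\nabla u_0^f}^p\der x\le((\Lambda_\sigma-\Lambda_0)f,f)$. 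On $B$ the hypothesis $\sigma\ge 1+\alpha$ gives $1-\sigma^{-1/(p-1)}\ge 1-(1+\alpha)^{-1/(p-1)}$, while on $\Omega\setminus B$ the global assumption $\sigma\ge 1$ gives $1-\sigma^{-1/(p-1)}\ge 0$; discarding the latter contribution leaves $\tilde\alpha(\Lambda_B(f),f)\le((\Lambda_\sigma-\Lambda_0)f,f)$ for every $f$, which is (a). Part (b) is analogous using the upper bound $((\Lambda_\sigma-\Lambda_0)f,f)\le\int_\Omega(\sigma-1)\abs{\nabla u_0^f}^p\der x$: bounding $\sigma-1\le-\alpha$ on $B$ and using $\sigma\le 1$ to discard the term on $\Omega\setminus B$ yields $((\Lambda_\sigma-\Lambda_0)f,f)\le-\alpha(\Lambda_B(f),f)$.

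For (c) and (d) I would argue by contraposition. Write $A:=\cessco(\esssupp(\abs{\sigma-1}))$, which is compact and convex and, by Lemma~\ref{lemma:convexhullsupportproperties}(b),(d), satisfies $\sigma=1$ almost everywhere on $\Omega\setminus A$. Suppose $m(B\setminus A)>0$. By the Lebesgue density theorem choose a density point $x_0$ of $B\setminus A$; since $x_0\notin A$ and $A$ is compact and convex, strict separation provides a unit vector $\rho$ and a gap $\delta>0$ with $\sup_{a\in A}a\cdot\rho=:s<s+\delta=:t'<x_0\cdot\rho$. Because $x_0$ is a density point, $m(B\cap\{x\cdot\rho>t'\})>0$, whereas the inclusion is confined to $\{x\cdot\rho\le s\}$ up to a null set.

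Now I would feed the boundary data $f_\tau:=u_\tau|_{\doo\Omega}$ of the Wolff solution~\eqref{eq:real_test} with direction $\rho$ and shift $s$ into the assumed operator inequality. Since $u_\tau$ solves the homogeneous $p$-Laplace equation, uniqueness of the minimizer gives $u_0^{f_\tau}=u_\tau$, whence $(\Lambda_B(f_\tau),f_\tau)=\int_B\abs{\nabla u_\tau}^p\der x$. Combining the hypothesis $\Lambda_0+\alpha\Lambda_B\le\Lambda_\sigma$ with the upper bound in Lemma~\ref{lemma:mono} gives $\alpha\int_B\abs{\nabla u_\tau}^p\der x\le\int_\Omega(\sigma-1)\abs{\nabla u_\tau}^p\der x$. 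The right-hand integrand vanishes almost everywhere outside $A\subseteq\{x\cdot\rho\le s\}$ and is bounded there, so the upper Wolff bound $\abs{\nabla u_\tau}\le C\tau e^{\tau(x\cdot\rho-s)}\le C\tau$ on $\{x\cdot\rho\le s\}$ makes the right-hand side $O(\tau^p)$. On the left, the lower Wolff bound on $B\cap\{x\cdot\rho>t'\}$ gives $\int_B\abs{\nabla u_\tau}^p\der x\ge c^p\tau^p e^{p\tau\delta}\,m(B\cap\{x\cdot\rho>t'\})$, growing like $\tau^p e^{p\tau\delta}$. Dividing by $\tau^p$ and letting $\tau\to\infty$ contradicts the inequality, proving (c). Part (d) is identical except that I use the lower bound in Lemma~\ref{lemma:mono}, turning the hypothesis $\Lambda_0-\alpha\Lambda_B\ge\Lambda_\sigma$ into $\alpha\int_B\abs{\nabla u_\tau}^p\der x\le(p-1)\int_\Omega(\sigma^{-1/(p-1)}-1)\abs{\nabla u_\tau}^p\der x$; the new integrand again vanishes almost everywhere outside $A$ and is bounded above, so the same exponential-versus-polynomial dichotomy applies.

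The implications (a), (b) and the algebraic bookkeeping of the integrands are routine. The main obstacle, and the heart of the argument, is the separation step feeding into the Wolff estimate in (c) and (d): one must produce a half-space that contains the whole inclusion up to a null set yet leaves a genuine positive-measure slab of $B$ strictly on its far side, with a \emph{strictly positive} gap $\delta$. This gap is exactly what forces the exponential growth $e^{p\tau\delta}$ of the $\Lambda_B$-term to dominate the merely polynomial $O(\tau^p)$ growth of the inclusion term; without a uniform gap the two sides would grow at the same exponential rate and no contradiction would arise.
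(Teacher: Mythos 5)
Your proposal is correct and follows essentially the same route as the paper: parts (a) and (b) come from the two sides of the monotonicity inequality (Lemma~\ref{lemma:mono}) combined with the global sign assumption, and parts (c), (d) are proved by contraposition, separating a positive-measure piece of $B$ from $\cessco(\esssupp(|\sigma-1|))$ by a hyperplane with a strictly positive gap and then playing the exponential growth $e^{p\tau\delta}$ of the Wolff solutions on $B$'s side against the $O(\tau^p)$ bound on the inclusion side. The only cosmetic difference is that you strictly separate a Lebesgue density point of $B\setminus A$ from the compact convex hull, whereas the paper first shrinks $B$ to a small-ball piece and then separates two convex sets via Hahn--Banach; this is the same argument with slightly different bookkeeping.
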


Before we give the proof of theorem~\ref{thm:monotonicity_method} we show that this implies that 
$\Lambda_\sigma$ uniquely determines the convex hull of the support of $|\sigma-1|$:

\begin{corollary}\label{cor:mono}
Let either $\sigma(x)\geq 1$ hold (almost everywhere) in $\Omega$ or 
$\sigma(x)\leq 1$ hold (almost everywhere) in $\Omega$. Then
\[
\cessco(\esssupp(|\sigma-1|))=\cessco\left( \bigcup_{B\in \mathcal B} B^\circ \right),
\]
where $\mathcal B$ is the family of all measurable sets $B\subseteq \Omega$ for which there exists an $\alpha>0$ such that either $\Lambda_0+\alpha \Lambda_B\leq \Lambda_\sigma$ or $\Lambda_0-\alpha \Lambda_B\geq \Lambda_\sigma$.
\end{corollary}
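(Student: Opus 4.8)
The plan is to prove the two inclusions of the asserted equality separately. Throughout write $S:=\esssupp(|\sigma-1|)$, $A:=\cessco(S)$ and $U:=\bigcup_{B\in\mathcal B}B^\circ$, so that the goal is $\cessco(U)=A$; by lemma~\ref{lemma:convexhullsupportproperties}(a) both $A$ and $\cessco(U)$ are closed and convex. If $m(S)=0$ (that is, $\sigma=1$ almost everywhere) both sides are empty and nothing is to prove, so I assume $m(S)>0$. By the global sign hypothesis I treat the case $\sigma\geq 1$ almost everywhere; the case $\sigma\leq 1$ is identical after replacing parts (a),(c) of theorem~\ref{thm:monotonicity_method} by (b),(d). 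The argument I have in mind runs entirely through the half-space characterisations of Definitions~\ref{def:cessco} and~\ref{def_essential_support_function}, so that I never need support functions or limiting hyperplanes.

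For the inclusion $\cessco(U)\subseteq A$ I would take an arbitrary $B\in\mathcal B$. By definition of $\mathcal B$, one of the hypotheses of theorem~\ref{thm:monotonicity_method}(c),(d) holds, so $m(B\setminus A)=0$. This forces $B^\circ\subseteq A$ exactly: if $x\in B^\circ$ then $x$ has Lebesgue density $1$ in $B$, and since $B$ and $B\cap A$ differ by a null set, $x$ has density $1$ in $A$ as well; as $A$ is closed, any point of positive density of $A$ must lie in $A$. Hence $U\subseteq A=\cessco(S)$, and lemma~\ref{lemma:convexhullsupportproperties}(c) (with empty null set) gives $\cessco(U)\subseteq\cessco(S)=A$.

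For the reverse inclusion $A\subseteq\cessco(U)$ it suffices, by Definition~\ref{def:cessco}, to show that every closed half-space $H_{\rho,t}$ with $m(U\setminus H_{\rho,t})=0$ already satisfies $m(S\setminus H_{\rho,t})=0$, since then $A\subseteq H_{\rho,t}$ and intersecting over all such half-spaces yields $A\subseteq\cessco(U)$. I argue the contrapositive: suppose $m(S\setminus H_{\rho,t})=m(S\cap\{x\cdot\rho>t\})>0$. The first key step converts this into positivity of the contrast region: were $|\sigma-1|=0$ almost everywhere on the open set $\{x\cdot\rho>t\}\cap\Omega$, this set would lie in the complement of the essential support, contradicting $m(S\cap\{x\cdot\rho>t\})>0$; hence $m(\{\sigma>1\}\cap\{x\cdot\rho>t\})>0$. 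Writing $\{\sigma>1\}=\bigcup_k\{\sigma\geq 1+1/k\}$, I pick $k$ so that $E:=\{\sigma\geq 1+1/k\}\cap\{x\cdot\rho>t\}$ has positive measure and set $B:=E$ and $\alpha:=1/k$. Then $\essinf_B\sigma\geq 1+\alpha$, so theorem~\ref{thm:monotonicity_method}(a) gives $\Lambda_0+\tilde\alpha\Lambda_B\leq\Lambda_\sigma$ with $\tilde\alpha>0$, whence $B\in\mathcal B$ and $B^\circ\subseteq U$. By Lebesgue's density theorem $m(B^\circ)=m(B)>0$, while $B^\circ\subseteq E\subseteq\{x\cdot\rho>t\}$ is disjoint from $H_{\rho,t}$; therefore $m(U\setminus H_{\rho,t})\geq m(B^\circ)>0$, contradicting the assumption. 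This establishes the contrapositive and hence the inclusion.

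I expect the reverse inclusion to be the main obstacle, and within it the passage from \emph{$S$ has positive measure beyond the hyperplane} to \emph{a genuinely detectable test set reaches beyond it}: one must recover positivity of $\{\sigma\neq 1\}$ from that of the (generally larger) essential support $S$, which is exactly where the openness of the slab $\{x\cdot\rho>t\}$ and the definition of $\esssupp$ enter, and then extract a uniform contrast $\alpha=1/k>0$ so that theorem~\ref{thm:monotonicity_method}(a) applies verbatim. A secondary technical point I would address is the measurability of the uncountable union $U$: since each $B^\circ$ and $A$ are measurable with $U\subseteq A$, I would run the reverse-inclusion construction over a countable dense family of half-spaces to obtain a countable subfamily $\mathcal B_0\subseteq\mathcal B$, and then upgrade the resulting estimates to all directions using continuity of the (ordinary) support functions of the convex bodies $\cessco\!\big(\bigcup_{B\in\mathcal B_0}B^\circ\big)$ and $A$; the sandwich $\bigcup_{B\in\mathcal B_0}B^\circ\subseteq U\subseteq A$ then pins down $\cessco(U)=A$ by the monotonicity of $\cessco$ already used above.
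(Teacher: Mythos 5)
Your proposal is correct and follows essentially the same route as the paper's proof: one inclusion via theorem~\ref{thm:monotonicity_method}(c),(d) combined with the density/convex-hull facts of lemma~\ref{lemma:convexhullsupportproperties}, and the reverse inclusion by showing that any closed half-space $H$ with $m\bigl(\bigl(\bigcup_{B\in\mathcal B}B^\circ\bigr)\setminus H\bigr)=0$ forces $|\sigma-1|=0$ a.e.\ outside $H$, since otherwise a positive-measure set with uniform contrast $|\sigma-1|\geq 1/k$ would lie beyond $H$ and belong to $\mathcal B$ by theorem~\ref{thm:monotonicity_method}(a),(b), contradicting the null-measure condition. The remaining differences are cosmetic (contrapositive rather than contradiction, the explicit decomposition $\{\sigma>1\}=\bigcup_k\{\sigma\geq 1+1/k\}$ that the paper leaves implicit), and your side remark on measurability of the uncountable union addresses a point the paper itself glosses over.
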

\begin{proof}
By theorem~\ref{thm:monotonicity_method} (c) and (d), for every $B \in \mathcal{B}$ there exists a null set $N$ with
\begin{equation}
B \subseteq \cessco(\esssupp(|\sigma-1|)) \cup N.
\end{equation}
Lemma~\ref{lemma:convexhullsupportproperties} then implies $B^\circ\subseteq \cessco(B) \subseteq  \cessco(\esssupp(|\sigma-1|))$, so that
\begin{equation}
\bigcup_{B\in \mathcal B}  B^\circ  \subseteq \cessco(\esssupp(|\sigma-1|))
\end{equation}
and again by lemma~\ref{lemma:convexhullsupportproperties}
\begin{equation}
\cessco(\esssupp(|\sigma-1|)) \supseteq \cessco \sulut{\bigcup_{B\in \mathcal B} B^\circ}.
\end{equation}

To show ``$\subseteq$'' we assume that 
\[
\cessco(\esssupp(|\sigma-1|)) \setminus \cessco\left( \bigcup_{B\in \mathcal B} B^\circ \right)\neq \emptyset. 
\]
By definition~\ref{def:cessco}, there would then exist a closed halfspace $H$ with 
\begin{equation}\label{eq:corollary_mon_method_contradict}
m\left( \left( \bigcup_{B\in \mathcal B} B^\circ \right) \setminus H\right)=0\quad \text{ and } \quad \cessco(\esssupp(|\sigma-1|)) \setminus H\neq \emptyset.
\end{equation}
With lemma~\ref{lemma:convexhullsupportproperties} this would imply that every closed halfspace~$\tilde H$ with $\abs{\sigma-1}|_{\Omega\setminus \tilde H}=0$ a.e.\ 
must have an intersection of positive measure with the complement of $H$. Hence $|\sigma-1|$ could not be zero a.e.\ on $\Omega \setminus H$.
But then there would exist a measurable set $B\subseteq \Omega \setminus H$ with $B=B^\circ$, $m(B)=m(B\setminus H)>0$ and a constant $\alpha>0$ with $|\sigma-1|\geq \alpha$ in $B$. Theorem~\ref{thm:monotonicity_method}(a) and (b) would then imply that $B \in \mathcal B$ which would  contradict the first part of (\ref{eq:corollary_mon_method_contradict}).
\end{proof}

\noindent \textit{Proof of Theorem~\ref{thm:monotonicity_method}.}
\begin{enumerate}[(a)]
\item
Note that by our global monotonicity assumption, $\sigma|_B\geq 1+\alpha$ implies that we are in the case that
$\sigma\geq 1$ a.e.\ in $\Omega$. Hence we obtain from the monotonicity lemma \ref{lemma:mono} that
\begin{eqnarray*}
\left( (\Lambda_{\sigma}-\Lambda_0)(f),f\right)
&\geq & (p-1)\int_\Omega \frac{1}{\sigma^{1/(p-1)}} \left( \sigma^\frac{1}{p-1} - 1\right) |\nabla u_0|^p \der x\\
&\geq & (p-1)\int_B  \left( 1- \sigma^{-\frac{1}{p-1}} \right) |\nabla u_0|^p \der x\\
&\geq & (p-1)\left( 1- (1+\alpha)^{-\frac{1}{p-1}} \right)( \Lambda_B(f), f ).
\end{eqnarray*}
\item
If $\sigma|_B\leq 1-\alpha$ then we are in the case that
$\sigma\leq 1$ a.e.\ in $\Omega$ and we obtain from the monotonicity lemma \ref{lemma:mono} that
\begin{eqnarray*}
\left( (\Lambda_{\sigma}-\Lambda_0)(f),f\right)
&\leq &\int_\Omega (\sigma-1) |\nabla u_0|^p \der x
\leq \int_B (\sigma-1) |\nabla u_0|^p \der x\\
&\leq &- \alpha ( \Lambda_B(f), f ).
\end{eqnarray*}
\item
We set $D:=\cessco(\esssupp(|\sigma-1|))$ and assume that $B\setminus D$ is not a null set. 
We will prove that 
\begin{equation}\label{Lambda_not_inequality}
\Lambda_0+\alpha \Lambda_B\not \leq \Lambda_\sigma.
\end{equation}

Since shrinking $B$ will lead to a smaller $\Lambda_B$, it suffices to prove (\ref{Lambda_not_inequality}) with $B$ replaced by a 
subset of $B$. Hence, by replacing $B$ with $B\setminus D$, we can assume, without any loss of generality, that 
\[
m(B)>0\quad \text{ and } \quad B\cap D=\emptyset. 
\]
Moreover, it follows from elementary measure theory or Lebesgue's density theorem that there exists a point 
$x\in B$ such that $m(B\cap B_r(x))>0$ for all sufficiently small open balls $B_r(x)$. By replacing $B$ with $B\cap B_r(x)$ for a sufficiently small ball, we can therefore assume without losing generality that
\[
m(B)>0\quad \text{ and } \quad \cessco(B)\cap D=\emptyset. 
\]
Using the Hahn-Banach (or hyperplane) separation theorem (see for example~\cite[corollary 11.4.2]{Rockafellar:1970}) we then obtain a vector $\rho\in \R^n$, $|\rho|=1$, and numbers $t\in \R$, $\epsilon>0$ such that
\begin{alignat*}{2}
x\cdot \rho &< t \quad && \mbox{ for all } x\in D, \quad \mbox{ and }\\
x\cdot \rho &> t+\epsilon \quad && \mbox{ for all } x\in B.
\end{alignat*}
With the Wolff solutions from lemma~\ref{lemma:Wolff}, it follows that there exist constants $c,C>0$ so that for each $\tau>0$
there exists a solution $u_{0,\tau}$ of the homogeneous $p$-Laplace equation with 
\[
c \tau e^{\tau (x\cdot \rho-t)} \leq |\nabla u_{0,\tau}(x)| \leq C \tau e^{\tau (x\cdot \rho-t)} \quad \forall x\in \Omega.
\]
With $f_\tau:=u_{0,\tau}|_{\doo \Omega}$ it follows that
\begin{eqnarray*}
\left( \alpha \Lambda_B(f_\tau),f_\tau \right)= \alpha \int_B |\nabla u_{0,\tau}|^p \der x
\geq \alpha m(B) c^p \tau^p e^{p \tau \epsilon},
\end{eqnarray*}
and using the monotonicity lemma~\ref{lemma:mono} we obtain
\begin{equation}
\begin{split}
\left( (\Lambda_{\sigma}-\Lambda_0)(f_\tau),f_\tau \right) 
&\leq  \int_\Omega (\sigma-1) |\nabla u_{0,\tau}|^p \der x \\
&\leq \norm{\sigma - 1}_{L^\infty(\Omega)} C^p \tau^p m(D).
\end{split}
\end{equation}
For large enough $\tau$ we have that
\begin{equation}
\norm{\sigma - 1}_{L^\infty(\Omega)} C^p m(D)<\alpha m(B) c^p  e^{p \tau \epsilon}
\end{equation}
and thus 
\[
\left( \alpha \Lambda_B(f_\tau),f_\tau \right) > \left( (\Lambda_{\sigma}-\Lambda_0)(f_\tau),f_\tau \right),
\]
which proves (\ref{Lambda_not_inequality}).
\item  As in (c) we set $D:=\cessco(\esssupp(|\sigma-1|))$ and assume w.l.o.g. that $m(B)>0$ and $\cessco(B)\cap D=\emptyset$.
With the same Wolff solutions as in (c) we obtain from the monotonicity lemma \ref{lemma:mono} that
\begin{eqnarray*}
\left( (\Lambda_{\sigma}-\Lambda_0)(f_\tau),f_\tau \right)&\geq & 
(p-1)\int_\Omega  \left( 1 - \sigma^{-\frac{1}{p-1}} \right) |\nabla u_{0,\tau}|^p \der x\\
&\geq & - (p-1) \norm{\sigma^{-1}}_{L^\infty(\Omega)}^{\frac{1}{p-1}} \int_D   |\nabla u_{0,\tau}|^p \der x\\ %
&\geq & -(p-1) \norm{\sigma^{-1}}_{L^\infty(\Omega)}^{\frac{1}{p-1}} m(D) C^p\tau^p,\\
\end{eqnarray*}
so that for large enough $\tau>0$
\[
\left( -\alpha \Lambda_B(f_\tau),f_\tau \right) < \left( (\Lambda_{\sigma}-\Lambda_0)(f_\tau),f_\tau \right),
\]
which shows that $\Lambda_{\sigma}-\Lambda_0\not\leq -\alpha \Lambda_B$. \hfill $\Box$
\end{enumerate}

\section{Enclosure method} \label{sec:enclosure}

We define the indicator function $I_{\rho}(t,\tau)$ as
\begin{equation*}
I_{\rho}(t,\tau) = \tau^{-p} \ip{\sulut{\Lambda_\sigma-\Lambda_1}(f_\tau)}{f_\tau},
\end{equation*}
where $\rho \in \R^n$ is a unit vector, $t \in \R$, $\tau > 0$, and $f_\tau = u_{\tau}|_{\partial \Omega}$ where $u_{\tau}$ are the Wolff solutions given by \eqref{eq:real_test}.

The indicator function for any $\rho$, $t$, and $\tau$ is determined by $\Lambda_{\sigma}$. Thus the next theorem implies that $\Lambda_{\sigma}$ determines the essential convex hull of the inclusion~$D$, where we write 
\[
D = \mathrm{supp}(\sigma-1).
\]

\begin{theorem} \label{thm:enclosure_method}
Suppose $\sigma \in L^\infty_+\sulut{\Omega}$ and either $\sigma \geq 1$ or $\sigma \leq 1$ almost everywhere.
Then
\begin{equation*}
\cessco D = \bigcap_{\rho \in \R^n; \abs{\rho} = 1} \ol{H}_\rho,
\end{equation*}
where
\begin{equation*}
H_\rho = \joukko{ x \in \R^n \,;\, \abs{I_{\rho}(x \cdot \rho,\tau)} \to \infty \text{ as } \tau \to \infty}.
\end{equation*}
\end{theorem}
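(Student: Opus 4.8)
\emph{Proof proposal.}
The plan is to read off the blow-up or decay of the indicator $I_\rho(t,\tau)$ in $\tau$ directly from the two-sided monotonicity inequality of Lemma~\ref{lemma:mono}, applied with $\sigma_0 = 1$, together with the pointwise gradient bounds for the Wolff solutions in Lemma~\ref{lemma:Wolff}. Since $u_\tau$ solves the homogeneous $p$-Laplace equation in $\R^n$, its restriction to $\Omega$ is exactly the minimizer $u_0$ associated to $\sigma_0=1$ in Lemma~\ref{lemma:mono}, so both sides of that inequality apply with $u_0=u_\tau$. The global sign assumption ($\sigma\ge 1$ or $\sigma\le 1$ a.e.) forces $I_\rho(t,\tau)$ to have a fixed sign, and hence $|I_\rho|$ is controlled from both sides by integrals of the form $\int_D g(x)\, e^{p\tau(x\cdot\rho - t)}\,\der x$, where $g\ge 0$ is bounded and, crucially, $\esssupp g = D$: for the upper bound $g$ is a constant multiple of $|\sigma-1|$, and for the lower bound $g$ is a multiple of $|\sigma^{1/(p-1)}-1|/\sigma^{1/(p-1)}$, which vanishes on exactly the same set $\{\sigma=1\}$. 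I would write $h_D(\rho)$ for the essential convex support function of $D$ from Definition~\ref{def_essential_support_function} (assuming $m(D)>0$; the degenerate case $m(D)=0$ gives $\Lambda_\sigma=\Lambda_1$, $I_\rho\equiv 0$, and both sides empty), and note that $H_\rho$ depends on $x$ only through $x\cdot\rho$. The theorem then reduces to the dichotomy $|I_\rho(t,\tau)|\to\infty$ for $t<h_D(\rho)$ and $|I_\rho(t,\tau)|\to 0$ for $t>h_D(\rho)$.

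For the decay direction, suppose $t>h_D(\rho)$. By the defining property of $h_D$ listed after Definition~\ref{def_essential_support_function}, $x\cdot\rho\le h_D(\rho)<t$ for almost every $x\in D$, so the exponent in $e^{p\tau(x\cdot\rho-t)}$ is at most the negative constant $p(h_D(\rho)-t)$ on $D$. Hence $\int_D g\, e^{p\tau(x\cdot\rho-t)}\,\der x\le \norm{g}_\infty\, m(D)\, e^{p\tau(h_D(\rho)-t)}\to 0$, and the sandwiching from Lemma~\ref{lemma:mono} gives $|I_\rho(t,\tau)|\to 0$. Thus every $x$ with $x\cdot\rho>h_D(\rho)$ lies outside $H_\rho$.

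For the blow-up direction, suppose $t<h_D(\rho)$ and fix $t'$ with $t<t'<h_D(\rho)$. The key measure-theoretic step is that $g$ cannot vanish almost everywhere on the open half-space $\{x\cdot\rho>t'\}$: if it did, then $m(D\setminus H_{\rho,t'})=0$ by Lemma~\ref{lemma:convexhullsupportproperties}(d), forcing $t'\ge h_D(\rho)$ by the definition of $h_D$, a contradiction. Therefore $\int_{\{x\cdot\rho>t'\}} g\,\der x>0$, and since $e^{p\tau(x\cdot\rho-t)}\ge e^{p\tau(t'-t)}$ there, we get $\int_D g\, e^{p\tau(x\cdot\rho-t)}\,\der x\ge e^{p\tau(t'-t)}\int_{\{x\cdot\rho>t'\}} g\,\der x\to\infty$. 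Using the side of Lemma~\ref{lemma:mono} whose integrand shares the essential support $D$ (the lower bound when $\sigma\ge 1$, the upper bound when $\sigma\le 1$), this forces $|I_\rho(t,\tau)|\to\infty$, so every $x$ with $x\cdot\rho<h_D(\rho)$ lies in $H_\rho$.

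Combining the two directions yields $\{x:\,x\cdot\rho<h_D(\rho)\}\subseteq H_\rho\subseteq\{x:\,x\cdot\rho\le h_D(\rho)\}$, whence $\ol{H}_\rho=H_{\rho,h_D(\rho)}$. Intersecting over all unit $\rho$ and recalling that $\cessco D=\bigcap_\rho H_{\rho,h_D(\rho)}$ — the infimum in Definition~\ref{def_essential_support_function} is attained, so the tightest admissible half-space in direction $\rho$ is exactly $H_{\rho,h_D(\rho)}$ — finishes the proof. I expect the main obstacle to be the blow-up direction: establishing the strict positivity of $\int_{\{x\cdot\rho>t'\}} g\,\der x$ purely from $\esssupp g=D$, with no regularity or jump hypothesis on $\sigma$, is precisely where the essential-support machinery of Lemma~\ref{lemma:convexhullsupportproperties} is needed, and it is also what makes it essential to use the lower bound of the monotonicity inequality (whose integrand has full essential support $D$) rather than merely its nonnegativity.
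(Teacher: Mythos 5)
Your proposal is correct and takes essentially the same route as the paper: both control the indicator from both sides via the monotonicity inequality (Lemma~\ref{lemma:mono}) evaluated on Wolff solutions (Lemma~\ref{lemma:Wolff}), establish the dichotomy $\abs{I_\rho(t,\tau)}\to 0$ for $t>h_D(\rho)$ versus $\abs{I_\rho(t,\tau)}\to\infty$ for $t<h_D(\rho)$, deduce $\ol{H}_\rho=\joukko{x\in\R^n;\ x\cdot\rho\le h_D(\rho)}$, and intersect over directions. The only (harmless) deviations are that you replace the paper's Lemma~\ref{lemma:expdown} by a direct exponential bound using $x\cdot\rho\le h_D(\rho)$ a.e.\ on $D$, and the paper's Lemma~\ref{lemma:good_set} by the observation (via Lemma~\ref{lemma:convexhullsupportproperties}(d) and the definition of $h_D$) that the coercive-side integrand cannot vanish a.e.\ beyond the hyperplane $\{x\cdot\rho=t'\}$ when $t'<h_D(\rho)$ --- equivalent measure-theoretic arguments.
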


Before the proof, we give two simple lemmas.

\begin{lemma}
\label{lemma:expdown}
Let $A \subset \R^n$ be a bounded measurable set. Then 
\begin{equation*}
\int_{A \cap \{ x \cdot \rho \leq t \}} e^{p\tau(x \cdot \rho - t)} \der x \leq \frac{C_A}{p\tau}.
\end{equation*}
\end{lemma}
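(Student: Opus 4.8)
The plan is to reduce the $n$-dimensional integral to a one-dimensional exponential integral in the $\rho$-direction, where the smallness in $\tau$ comes from, and to absorb the $(n-1)$ transverse directions into a single constant using boundedness of $A$. First I would choose an orthonormal basis of $\R^n$ containing $\rho$ and write each point as $x = s\rho + y$, with $s = x\cdot\rho \in \R$ and $y$ ranging over the hyperplane $\rho^\perp$, so that $\der x = \der s\,\der y$ and the region $\{x\cdot\rho\le t\}$ becomes $\{s\le t\}$.

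Next I would apply Fubini's theorem to obtain
\[
\int_{A\cap\{x\cdot\rho\le t\}} e^{p\tau(x\cdot\rho - t)}\der x
= \int_{-\infty}^{t} e^{p\tau(s-t)}\, g(s)\,\der s,
\]
where $g(s)$ is the $(n-1)$-dimensional Lebesgue measure of the slice $\{y\in\rho^\perp : s\rho + y\in A\}$. Since $A$ is bounded it lies in some ball $B_R(0)$, so every slice is contained in an $(n-1)$-dimensional ball of radius $R$; hence $g(s)\le \omega_{n-1}R^{n-1}=:C_A$ uniformly in $s$ (and, importantly, uniformly in $\rho$ and $t$). Bounding $g(s)$ by $C_A$ and using the elementary identity $\int_{-\infty}^{t} e^{p\tau(s-t)}\der s = \tfrac{1}{p\tau}$ then yields the stated bound $\tfrac{C_A}{p\tau}$.

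I do not expect any real obstacle here, as the estimate is elementary; the only points needing a word of care are the measurability of the slices and the uniform bound on $g$, both of which follow from the measurability and boundedness of $A$ together with Fubini. The key structural feature to flag is that $C_A$ depends only on the diameter of $A$ and not on $\rho$, $t$, or $\tau$, which is exactly what the subsequent enclosure-method argument will require when it takes $\tau\to\infty$.
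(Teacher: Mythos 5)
Your proof is correct and follows essentially the same route as the paper: a Fubini reduction along the $\rho$-direction, a uniform bound on the transverse slice measure coming from the boundedness of $A$, and the exact evaluation $\int_{-\infty}^{t} e^{p\tau(s-t)}\,\der s = \frac{1}{p\tau}$. The only cosmetic difference is the constant: the paper encloses the slices in a cube of side $2R$, giving $C_A = (2R)^{n-1}$, while you use an $(n-1)$-dimensional ball, giving $C_A = \omega_{n-1}R^{n-1}$; both serve equally well since, as you correctly emphasize, the constant is uniform in $\rho$, $t$, and $\tau$.
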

\begin{proof}
If $R > 0$ is such that $A \subset \overline{B(0,R)}$, we have 
\[
\int_{A \cap \{ x \cdot \rho \leq t \}} e^{p\tau(x \cdot \rho - t)} \der x \leq (2R)^{n-1} \int_{-\infty}^t e^{p\tau(s-t)} \der s = \frac{(2R)^{n-1}}{p\tau}. \qedhere
\]
\end{proof}

Before the next lemma recall that $h_D$ is the convex support function of the set~$D$, as defined in section~\ref{sec:prelim}.

\begin{lemma}
\label{lemma:good_set}
Suppose $\sigma \in L^\infty_+\sulut{\Omega}$ and either $\sigma \geq 1$ or $\sigma \leq 1$ almost everywhere. Let also $t < h_D(\rho)$. Then there exists a set~$S \subset D$ which satisfies the following conditions:
\begin{enumerate}
\item $m(S) > 0$.
\item There is $\eps_1 > 0$ such that for all $x\in S$ we have $x \cdot \rho > t + \eps_1$.
\item There is $\eps_2 > 0$ such that for all $x\in S$ we have $\sigma (x) > 1 + \eps_2$ or $\sigma (x) + \eps_2 < 1 $.
\end{enumerate}
\end{lemma}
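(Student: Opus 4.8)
The plan is to construct the set $S$ directly from the definition of the essential convex support function $h_D$. Since we assume $t < h_D(\rho)$, the defining property of $h_D$ (stated just after Definition~\ref{def_essential_support_function}) gives $m(D \setminus \{x \cdot \rho \leq t\}) > 0$; that is, the set $D \cap \{x \cdot \rho > t\}$ has positive measure. This already produces a candidate set on which condition~(2) holds in a weak sense, but I would need to upgrade the strict inequality $x \cdot \rho > t$ to a uniform margin $x \cdot \rho > t + \eps_1$. To do this I would write $D \cap \{x \cdot \rho > t\}$ as the increasing union over $k \in \N$ of the sets $D_k := D \cap \{x \cdot \rho > t + 1/k\}$; by continuity of measure from below, $m(D_k) \to m(D \cap \{x \cdot \rho > t\}) > 0$, so some $D_k$ has positive measure, and fixing such a $k$ gives $\eps_1 = 1/k$ and a set $D_{k}$ satisfying (1) and (2).

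The next step is to arrange condition~(3) simultaneously, using the global monotonicity hypothesis. Assume first that $\sigma \geq 1$ almost everywhere; the case $\sigma \leq 1$ is symmetric. On $D = \esssupp(\sigma - 1)$ the function $\sigma - 1$ is positive in an essential sense, but I must extract a uniform lower bound $\sigma - 1 > \eps_2$ on a positive-measure subset. The key observation is that $\{x : \sigma(x) > 1 + 1/j\}$ increases to a set whose complement within $D$ is null: if $m(D_k \cap \{\sigma \leq 1 + 1/j\}) $ failed to shrink to zero, one would contradict that $D$ is the essential support of $\sigma - 1$ (any open subset where $\sigma - 1 = 0$ a.e.\ is excluded from $D$, and more quantitatively the level sets $\{\sigma = 1\}$ cannot carry positive measure inside $\esssupp(\sigma-1)$ in a way that persists). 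Concretely I would intersect: $D_k = \bigcup_j \left( D_k \cap \{\sigma > 1 + 1/j\} \right)$ up to a null set, so again by continuity of measure some intersection $S := D_k \cap \{\sigma > 1 + 1/j\}$ has positive measure. Setting $\eps_2 = 1/j$ yields condition~(3), while $S \subseteq D_k$ preserves (1) and (2).

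The main obstacle I expect is the measure-theoretic bookkeeping in condition~(3): I must be careful that $\esssupp(\sigma - 1)$ genuinely forces $\sigma - 1$ to be bounded away from zero on a positive-measure subset of any positive-measure piece of $D$. The subtle point is that the essential support only guarantees $\sigma - 1 \neq 0$ in the sense that it is not a.e.\ zero on any open neighborhood, not that $\sigma - 1 \geq \eps_2 > 0$ on a large set. The clean way to handle this is to note that $\{\sigma = 1\} \cap D^\circ$ could a priori have positive measure, so I should instead argue via the decomposition $D_k = (D_k \cap \{\sigma = 1\}) \cup \bigcup_j (D_k \cap \{\sigma > 1 + 1/j\})$ and show the first piece cannot exhaust $D_k$: if $m(D_k \cap \{\sigma > 1\}) = 0$, then $\sigma = 1$ a.e.\ on $D_k$, which together with $m(D_k) > 0$ and the definition of essential support yields a contradiction, since points of $D_k$ of Lebesgue density one would lie in an essentially-zero region of $\sigma - 1$, contradicting membership in $\esssupp(\sigma - 1)$. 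Once this positivity is secured, the continuity-of-measure argument delivers the uniform $\eps_2$, and collecting $\eps_1$ and $\eps_2$ completes the construction with $S \subseteq D$ satisfying all three conditions.
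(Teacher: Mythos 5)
Your overall route---restrict to a slab $D_k = D \cap \{x\cdot\rho > t + 1/k\}$ of positive measure, then pigeonhole over the level sets $\{\sigma > 1 + 1/j\}$---is exactly the paper's strategy, and you correctly flag the subtlety that the paper passes over in one line (the paper simply asserts that $\sigma > 1$, or $\sigma<1$, on $\tilde S = D \cap \{x\cdot\rho > t+\eps_1\}$). But your resolution of that subtlety is wrong. Both your ``key observation'' (that $\{\sigma>1+1/j\}$ exhausts $D_k$ up to a null set) and your fallback argument fail at the same point: you claim that if $\sigma = 1$ a.e.\ on $D_k$, then density-one points of $D_k$ ``lie in an essentially-zero region of $\sigma-1$, contradicting membership in $\esssupp(\sigma-1)$.'' That implication is false. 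The essential support is defined through \emph{open} sets on which the function vanishes a.e., not through density points, and a point can be a density-one point of $\{\sigma = 1\}$ while belonging to $\esssupp(\sigma-1)$. Concretely, take $U \subset \Omega$ open and dense with $m(U)$ arbitrarily small, and $\sigma = 1 + \chi_U$. Every open subset of $\Omega$ meets $U$ in a set of positive measure, so no open set is excluded and $\esssupp(\sigma-1)$ contains all of $\Omega$; yet $\sigma = 1$ on $\Omega\setminus U$, a set of nearly full measure almost all of whose points are density-one points of $\{\sigma=1\}$. So ``lying in an essentially-zero region'' produces no contradiction, and your argument that $m(D_k \cap \{\sigma>1\})>0$ never closes. (The same example shows your stronger claim that $\sigma>1$ a.e.\ on $D$ is false in general.)

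The step can be repaired, but only with two ingredients you never invoke: the \emph{openness} of the half-space slab, and the fact that $\sigma - 1 = 0$ a.e.\ on $\Omega \setminus D$ (this needs a Lindel\"of/countable-subcover argument, since the union of open sets defining the complement of the essential support may be uncountable; it is implicitly contained in the proof of lemma~\ref{lemma:convexhullsupportproperties}(d)). With these: if $\sigma = 1$ a.e.\ on $D_k$, then---since $\sigma = 1$ a.e.\ also on the part of the slab outside $D$---we get $\sigma = 1$ a.e.\ on the open set $O = \Omega \cap \{x\cdot\rho > t+1/k\}$; by the very definition of the essential support, $O$ is then disjoint from $D$, so $m(D_k)=0$, the desired contradiction (equivalently $h_D(\rho) \leq t+1/k$, contradicting the choice of $k$). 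Alternatively, you can cite directly the equivalence established in the proof of lemma~\ref{lemma:convexhullsupportproperties}(d): $m(\esssupp(f)\setminus H)=0$ if and only if $f|_{\Omega\setminus H}=0$ a.e., applied to the closed half-space $H_{\rho,t+1/k}$. Once $m(O\cap\{\sigma\neq 1\})>0$ is secured, your pigeonhole over $j$, together with the null-ness of $\{\sigma\neq 1\}\setminus D$ (so that $S$ may be taken inside $D$), finishes the proof as you intend; this is also what is needed to make the paper's own one-line assertion rigorous.
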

\begin{proof}
Choose $\eps_1$ so that $0 < \eps_1 < h_D(\rho)-t$, and define 
\[
\tilde{S} = \{ x \in D \,;\, x \cdot \rho > t + \eps_1 \}.
\]
Then (1) and (2) are true for $\tilde{S}$ by the definition of the essential convex hull and~$h_D$.
To also satisfy (3), we observe that $\tilde S$ is a set of positive measure and $\sigma > 1$ (or $\sigma < 1$) in $\tilde S$.
Since
\begin{equation*}
\tilde S = \bigcup_{k \in \Z_+} \joukko{x \in \tilde S ; \abs{\sigma -1} > 1/k},
\end{equation*}
and $\tilde S$ has positive measure, at least one of the sets~$ \joukko{x \in \tilde S ; \abs{\sigma -1} > 1/k}$ must have positive measure. We choose it as the set~$S$.
\end{proof}

\begin{proof}[Proof of Theorem \ref{thm:enclosure_method}]
First, if $\sigma = 1$ almost everywhere, the indicator function vanishes, and the claim is true.
Let us suppose this is not the case. Then $D = \mathrm{supp}(\sigma-1)$ is nonempty, and we consider a fixed direction~$\rho \in \R^{n}$, $\abs{\rho} = 1$.

Suppose first that $\sigma \geq 1$ almost everywhere. If $t > h_D(\rho)$, then we have by the monotonicity inequality (Lemma~\ref{lemma:mono}) and Lemma~\ref{lemma:Wolff} that 
\begin{equation*}
\begin{split}
0 \leq I_{\rho}(t,\tau) &= \tau^{-p } \ip{\sulut{\Lambda_\sigma-\Lambda_1}(f_\tau)}{f_\tau} \leq \tau^{-p}\int_\Omega \sulut{\sigma-1} \abs{\nabla u_\tau}^p \der x \\
&= \tau^{-p}\int_D \sulut{\sigma-1} \abs{\nabla u_\tau}^p \der x \\
&\leq C \int_D (\sigma-1) e^{p\tau(x \cdot \rho - t)} \der x \\
&\leq C \norm{\sigma-1}_{L^{\infty}(\Omega)} \int_{D \cap \{ x \cdot \rho \leq t \}} e^{p\tau(x \cdot \rho - t)} \der x.
\end{split}
\end{equation*}

In the last inequality, we used the fact that $m(D \setminus \{ x \cdot \rho \leq t) \}) = 0$ since $t > h_D(\rho)$. Now, Lemma \ref{lemma:expdown} implies that for any $t > h_D(\rho)$ one has $I_{\rho}(t,\tau) \to 0$ as $\tau \to \infty$.

Next, suppose $t < h_D(\rho)$. By the monotonicity inequality, Lemma~\ref{lemma:mono}, we get
\begin{equation*}
\begin{split}
I_{\rho}(t,\tau) &= \tau^{-p} \ip{\sulut{\Lambda_\sigma-\Lambda_1}(f_\tau)}{f_\tau} \\
&\geq (p-1)\tau^{-p}\int_{D} \sulut{1-\sigma^{-1/(p-1)}} |\nabla u_\tau|^p \der x \\
&\geq (p-1) \int_D \sulut{1-\sigma^{-1/(p-1)}} e^{p\tau(x \cdot \rho - t)} \der x.
\end{split}
\end{equation*}
By Lemma \ref{lemma:good_set}, there is a set $S \subseteq D$ with positive measure so that $x \cdot \rho > t + \eps_1$ and $1-\sigma^{-1/(p-1)} > \eps_3$ on $S$ for some $\eps_1, \eps_3 > 0$. Thus 
\[
\int_D \sulut{1-\sigma^{-1/(p-1)}} e^{p\tau(x \cdot \rho - t)} \der x \geq \int_S \eps_3 e^{p\tau \eps_1} \der x.
\]
This shows that $I_{\rho}(t,\tau) \to \infty \text{ as } \tau \to \infty$.

If instead $\sigma \leq 1$ almost everywhere, then the previous proof works with minor changes and gives that $I_{\rho}(t,\tau) \to 0$ for $t > h_D(\rho)$ and $I_{\rho}(t,\tau) \to -\infty$ for $t < h_D(\rho)$. Also note that, by definition the set $H_{\rho}$ has the following form
\[
H_\rho = \joukko{ x \in \R^n \,;\, \abs{I_{\rho}(x \cdot \rho,\tau)} \to \infty \text{ as } \tau \to \infty}.
\]
Therefore, we have established that
\begin{equation*}
\joukko{x \in \R^n; x \cdot \rho < h_D(\rho)} \subseteq H_{\rho} \subseteq \joukko{x \in \R^n; x \cdot \rho \leq h_D(\rho)},
\end{equation*}
so
\begin{equation*}
\ol{H}_\rho = \joukko{x \in \R^n; x \cdot \rho \leq h_D(\rho)}.
\end{equation*}

By the definition of $h_D$, one has $m\sulut{D \setminus \ol{H}_\rho}=0$ for each direction~$\rho$.
This implies, by definition of the essential convex hull,
\begin{equation*}
\cessco D \subseteq \bigcap_{\rho \in \R^n; \abs{\rho} = 1} \ol{H}_\rho.
\end{equation*}
To see the other direction, consider some $x_0 \in \bigcap_{\rho \in \R^n; \abs{\rho} = 1} \ol{H}_\rho$, and an arbitrary closed  hyperplane~$H$ satisfying $m\sulut{D \setminus H} = 0$.
There exists $\rho_0 \in \R^n, \abs{\rho_0} = 1$, and  $t_0 \in \R $ such that
\begin{equation*}
H = \joukko{x \in \R^n; x \cdot \rho_0 \leq t_0}.
\end{equation*}
By definition of $h_D$ we then have $t_0 \geq h_D(\rho_0)$, so that $\ol{H}_{\rho_0} \subseteq H$ and consequently $x_0 \in H$.
\end{proof}

\begin{remark}
The situation where both of $D_\pm$ have positive measure is similar to what can be found in~\cite[section 5]{Brander:Kar:Salo:2015}.
\end{remark}

\begin{remark}
The behaviour of the indicator function when $t = h_D$ is tricky.
By the same proof as in~\cite[lemma 4.6]{Brander:Kar:Salo:2015} we have $I_{\rho}(h_D,\tau) \leq C$ (note that we have a different power of~$\tau$ in the indicator function).
The lower bound uses Lipschitz regularity and jump condition on the boundary of the inclusion, so it seems unlikely to work in the present case.
\end{remark}

\section{Boundary determination} \label{sec:boundary}

In this section we give a boundary determination result for the $p$-Calder{\'o}n problem. This result was proved in~\cite[Theorem 1.1]{Salo:Zhong:2012} for domains with $C^1$ boundary. We assume that the domain is strictly convex instead.

\begin{theorem} \label{thm:boundary}
Let $\Omega \subset \R^n$, $n \geq 2$, be a bounded open set so that $\ol{\Omega}$ is strictly convex. Suppose $\sigma \in C(\ol{\Omega})$ is a positive function. Then the nonlinear DN map $\Lambda_{\sigma}$ determines $\sigma|_{\doo \Omega}$.
\end{theorem}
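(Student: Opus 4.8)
The plan is to recover the boundary value $\sigma(z)$ at an arbitrary point $z \in \doo\Omega$ by using the Wolff solutions from Lemma~\ref{lemma:Wolff} as test functions, in the spirit of the enclosure method developed in Section~\ref{sec:enclosure}. The key idea is that strict convexity lets us choose a direction $\rho \in S^{n-1}$ for which the supporting hyperplane $\{x \cdot \rho = z \cdot \rho\}$ touches $\ol\Omega$ \emph{only} at $z$; that is, $z$ is the unique maximizer of $x \mapsto x \cdot \rho$ over $\ol\Omega$. By Lemma~\ref{lemma:Wolff} the gradient of $u_\tau$ is comparable to $\tau e^{\tau(x\cdot\rho - t)}$, so for large $\tau$ the energy $\int_\Omega |\nabla u_\tau|^p\,\der x$ concentrates exponentially near $z$. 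The continuity of $\sigma$ at $z$ then means that on the tiny region carrying the bulk of the energy, $\sigma$ is essentially equal to $\sigma(z)$.

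\textbf{Carrying this out.} First I would pick $t = z\cdot\rho = h_{\ol\Omega}(\rho)$ so that the phase is nonpositive on $\ol\Omega$ and vanishes only at $z$. Rather than working with $\Lambda_\sigma - \Lambda_1$, I would directly examine the quadratic form $\langle \Lambda_\sigma(f_\tau), f_\tau\rangle = \int_\Omega \sigma |\nabla u_\tau^\sigma|^p\,\der x$, but since the true solution $u_\tau^\sigma$ is not explicit, the cleaner route is to test $\Lambda_\sigma$ against the \emph{known} Wolff data and use the monotonicity inequality (Lemma~\ref{lemma:mono}) to sandwich the resulting quantity between weighted integrals of $\sigma$ against $|\nabla u_\tau|^p$, where $u_\tau$ is the explicit homogeneous Wolff solution. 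The next step is the analytic heart: I claim that a suitably normalized ratio such as
\[
\frac{\int_\Omega \sigma(x)\, |\nabla u_\tau(x)|^p\,\der x}{\int_\Omega |\nabla u_\tau(x)|^p\,\der x} \longrightarrow \sigma(z) \quad \text{as } \tau \to \infty,
\]
because the weight $|\nabla u_\tau|^p \sim \tau^p e^{p\tau(x\cdot\rho - t)}$ becomes a (normalized) approximate identity concentrating at $z$, and $\sigma$ is continuous there. Splitting $\Omega$ into a small cap $\{x \cdot \rho > t - \delta\}$ where $|\sigma - \sigma(z)| < \eps$ and its complement (on which the exponential weight is $O(e^{-p\tau\delta})$ relative to the cap), and using that the cap retains a definite fraction of the total mass, gives the limit. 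Since the entire expression is read off from the DN map, $\sigma(z)$ is determined.

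\textbf{The main obstacle} is passing from the \emph{measured} quadratic form $\langle\Lambda_\sigma(f_\tau), f_\tau\rangle$, which involves the unknown solution $u_\tau^\sigma$, to the \emph{explicit} integrals against $|\nabla u_\tau|^p$ with the known homogeneous Wolff gradient. Lemma~\ref{lemma:mono} provides two-sided bounds, but the lower bound carries the factor $\sigma_0/\sigma_1^{1/(p-1)}(\sigma_1^{1/(p-1)} - \sigma_0^{1/(p-1)})$ rather than a clean $\sigma - 1$, so the upper and lower comparison weights differ. To extract $\sigma(z)$ exactly one must check that both bounds produce the \emph{same} limiting weighted average of $\sigma$ against the concentrating measure, i.e.\ that the nonlinear distortion factors both tend pointwise to the correct local value of $\sigma$ near $z$ and that the discrepancy between them vanishes after normalization. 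A secondary technical point is verifying that the homogeneous and inhomogeneous energies stay comparable so the denominator does not degenerate; this should follow from the uniform gradient bounds in Lemma~\ref{lemma:Wolff} together with the energy-minimality of $u_\tau^\sigma$. Establishing these limits rigorously, and confirming that strict convexity (hence uniqueness of the touching point $z$) is exactly what prevents mass from leaking to a second boundary point, is where the real work lies.
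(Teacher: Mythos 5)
Your setup (supporting hyperplane from strict convexity, Wolff solutions, the shrinking-cap argument giving concentration of $|\nabla u_\tau|^p$ near $z$) matches the paper's, and your approximate-identity claim for the explicit weight is correct. The fatal problem is exactly the ``main obstacle'' you flag, and the check you propose there fails: the two sides of Lemma~\ref{lemma:mono} do \emph{not} produce the same limit after normalization. Taking $\sigma_0=1$, $\sigma_1=\sigma$, dividing by $\int_\Omega|\nabla u_\tau|^p\,\der x$ and letting $\tau\to\infty$, your concentration argument gives for the measured ratio $R(\tau)$ only
\[
(p-1)\left(1-\sigma(z)^{-1/(p-1)}\right)\;\leq\;\liminf_{\tau\to\infty}R(\tau)\;\leq\;\limsup_{\tau\to\infty}R(\tau)\;\leq\;\sigma(z)-1.
\]
Setting $g(a)=a-1-(p-1)\left(1-a^{-1/(p-1)}\right)$, one has $g(1)=0$ and $g'(a)=1-a^{-p/(p-1)}$, so $g(a)>0$ for every $a\neq 1$ (for $p=2$ the two bounds are $(a-1)/a$ versus $a-1$). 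Thus the nonlinear distortion factors tend to genuinely different functions of $\sigma(z)$, the discrepancy does not vanish after normalization, and the measured quantity is only confined to a nondegenerate interval. Nothing in Lemma~\ref{lemma:mono} or in the explicit Wolff solutions tells you where in that interval the true limit sits --- that would require asymptotics of $\langle\Lambda_\sigma(f_\tau),f_\tau\rangle$ involving the unknown solution $u_\tau^\sigma$, which is precisely what you were trying to avoid. So $\sigma(z)$ cannot be read off as the limit of a single normalized measurement.

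The paper repairs this by extracting only \emph{sign} information, for which the mismatch between the two monotonicity weights is harmless. Fix a constant test conductivity $\gamma>0$ and set $I_\gamma(t,\tau)=\tau^{-p}\langle(\Lambda_\sigma-\Lambda_\gamma)(f_\tau),f_\tau\rangle$; since $\gamma$ is constant, the Wolff function $u_\tau$ is still an exact solution for the conductivity $\gamma$, so Lemma~\ref{lemma:mono} applies with $\sigma_0=\gamma$, $\sigma_1=\sigma$ and explicit $u_\tau$. The key point is that both weights, the lower one $(p-1)\gamma\sigma^{-1/(p-1)}\left(\sigma^{1/(p-1)}-\gamma^{1/(p-1)}\right)$ and the upper one $\sigma-\gamma$, have the same sign as $\sigma-\gamma$ pointwise. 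By continuity and strict convexity, if $\gamma\neq\sigma(x_0)$ then $\sigma-\gamma$ has a definite sign and modulus at least $\eps$ on a cap $S_t=\{x\cdot\rho>t\}\cap\ol\Omega$ with $t<t_0$; the cap contributes a term growing like $e^{p\tau(t'-t)}$ while $\Omega\setminus S_t$ contributes a bounded term, hence $I_\gamma(t,\tau)\to+\infty$ when $\gamma<\sigma(x_0)$ and $I_\gamma(t,\tau)\to-\infty$ when $\gamma>\sigma(x_0)$. Consequently $\sigma(x_0)=\inf\{\gamma>0:\lim_{\tau\to\infty}I_\gamma(t,\tau)=-\infty\}$ is determined by $\Lambda_\sigma$ (and can be located by bisection in $\gamma$). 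If you replace your limiting-ratio step by this one-parameter family of sign tests, the rest of your outline goes through.
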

\begin{proof}
Let $x_0 \in \doo \Omega$ be an arbitrary boundary point. Since $\ol{\Omega}$ is convex, Minkowski's supporting hyperplane theorem (see for example~\cite[theorem 11.6]{Rockafellar:1970}) implies that there is a closed half-space $H$ with 
\[
\ol{\Omega} \subset H, \quad x_0 \in \partial H.
\]
By strict convexity one has $\ol{\Omega} \cap \partial H = \{ x_0 \}$ (for if $\ol{\Omega} \cap \partial H$ would contain another point $y_0$, then the line segment between $x_0$ and $y_0$ would lie in $\ol{\Omega} \cap \partial H$ and thus also in $\partial \Omega$, contradicting strict convexity). 
The half-space $H$ may be written as 
\[
H = \{ x \in \R^n \,;\, \rho \cdot x \leq t_0 \}
\]
for some unit vector $\rho \in \R^n$, which will be fixed from now on, and some $t_0 \in \R$.

Let $\gamma \in \R_+$ be a constant, which we will use as a test conductivity. Define the indicator function 
\[
I_{\gamma}(t,\tau) = \tau^{-p} \ip{(\Lambda_{\sigma}-\Lambda_{\gamma})(f_{\tau})}{f_{\tau}}
\]
where $t \in \R$, $\tau > 0$, and $f_{\tau} = u_{\tau}|_{\partial \Omega}$ is the boundary value of the Wolff solution $u_{\tau}$ (which depends on $\rho$) that solves $\mathrm{div}(\gamma |\nabla u|^{p-2} \nabla u) = 0$ in $\Omega$.

For any fixed $t < t_0$, we are going to show that 
\begin{align}
\gamma < \sigma(x_0) &\implies I_{\gamma}(t,\tau) \to +\infty \text{ as $\tau \to \infty$,} \label{indicator_gamma_first} \\
\gamma > \sigma(x_0) &\implies I_{\gamma}(t,\tau) \to -\infty \text{ as $\tau \to \infty$.} \label{indicator_gamma_second}
\end{align}
Then $\sigma(x_0) = \inf\,\{ \gamma > 0 \,;\, \lim_{\tau \to \infty} I_{\gamma}(t,\tau) = -\infty \}$, which shows that $\sigma(x_0)$ can be determined from $\Lambda_{\sigma}$ thus concluding the proof.

Suppose that $\gamma \neq \sigma(x_0)$. By continuity of $\sigma$, there are $\eps, r > 0$ such that 
\[
|\sigma(x) - \gamma| \geq \eps, \qquad x \in B(x_0,r) \cap \ol{\Omega}.
\]
For $t \in \R$ consider the set 
\[
S_t = \{ x \in \R^n \,;\, \rho \cdot x > t \} \cap \ol{\Omega}.
\]
We claim that there exists $\delta > 0$ so that 
\[
S_{t_0 - \delta} \subset B(x_0,r) \cap \ol{\Omega}.
\]
For if not, then for any $j$ there is $x_j \in \ol{\Omega}$ with $\rho \cdot x_j > t_0 - 1/j$ but $x_j \notin B(x_0,r)$. The set $\ol{\Omega}$ is compact, hence some subsequence of $(x_j)$ converges to some $x_{\infty} \in \ol{\Omega}$ with $\rho \cdot x_{\infty} \geq t_0$. But we saw earlier that $\ol{\Omega} \subset H$ and $\ol{\Omega} \cap \partial H = \{ x_0 \}$ where $H = \{ \rho \cdot x \leq t_0 \}$, showing that $x_{\infty} = x_0$ which is a contradiction.

Now choose $t$ with $t_0 - \delta < t < t_0$. Then the set $S_t$ is nonempty with positive measure, and one has 
\[
|\sigma(x) - \gamma| \geq \eps, \qquad x \in S_t.
\]
Let $s \in \{ +1, -1 \}$ be the sign of $\sigma-\gamma$ in $S_t$. In the monotonicity inequality, Lemma~\ref{lemma:mono}, we write
\begin{align*}
F_+(a,b) &= (p-1) \frac{a}{b^{1/(p-1)}}\sulut{b^{1/(p-1)} - a^{1/(p-1)}} \\
F_-(a,b) &= a-b
\end{align*}
and estimate
\begin{equation*}
\begin{split}
sI_{\gamma}(t,\tau) &= s\tau^{-p} \ip{\sulut{\Lambda_\sigma-\Lambda_\gamma}f_\tau}{f_\tau} \geq s\tau^{-p}\int_\Omega F_s\sulut{\sigma(x),\gamma}\abs{\nabla u_\tau}^p \der x \\
&=s\tau^{-p} \Bigg(  \int_{S_t} F_s\sulut{\sigma(x),\gamma}\abs{\nabla u_\tau}^p \der x + \int_{\Omega \setminus S_t} F_s\sulut{\sigma(x),\gamma}\abs{\nabla u_\tau}^p \der x \Bigg) \\
&\geq s \Bigg( c \int_{S_t} F_s\sulut{\sigma(x),\gamma} e^{p\tau\sulut{x \cdot \rho - t}} \der x 
 - C \int_{\Omega \setminus S_t} F_s\sulut{\sigma(x),\gamma} e^{p\tau\sulut{x \cdot \rho - t}} \der x  \Bigg) \\
 & \geq c \int_{S_t} e^{p\tau\sulut{x \cdot \rho - t}} \der x - C \int_{\Omega \setminus S_t} e^{p\tau\sulut{x \cdot \rho - t}} \der x.
\end{split}
\end{equation*}
If $t < t' < t_0$, then $S_{t'}$ has positive measure and one has 
\[
\int_{S_t} e^{p\tau\sulut{x \cdot \rho - t}} \der x \geq \int_{S_{t'}} e^{p\tau\sulut{x \cdot \rho - t}} \der x \geq  e^{p\tau\sulut{t' - t}} m(S_{t'}) \to \infty
\]
as $\tau \to \infty$, and 
\[
\int_{\Omega \setminus S_t} e^{p\tau\sulut{x \cdot \rho - t}} \der x \leq \int_{\Omega \setminus S_t} \der x \leq C.
\]
Thus $I_{\gamma} \to +\infty$ when $\sigma(x_0) > \gamma$ and $I_{\gamma} \to -\infty$ when the opposite inequality holds. This proves \eqref{indicator_gamma_first}--\eqref{indicator_gamma_second} under the condition $t_0 - \delta < t < t_0$, but the result holds also for $t \leq t_0-\delta$ since 
\[
I_{\gamma}(t,\tau) = I_{\gamma}(t_0-\delta/2,\tau) e^{p\tau\sulut{t_0-\delta/2-t}}
\]
which follows by a short computation.
\end{proof}

\begin{remark}
It is clear that the same proof also gives the following result: if $\Omega \subset \R^n$ is a bounded open set, if $\Omega$ is strictly convex at $x_0 \in \partial \Omega$ in the sense that there is a closed half-space $H$ with 
\[
\ol{\Omega} \subset H, \quad \ol{\Omega} \cap \partial H = \{ x_0 \},
\]
and if $\sigma \in L^{\infty}_+(\Omega)$ is continuous near $x_0$, then the knowledge of $\Lambda_{\sigma}$ determines $\sigma(x_0)$.
\end{remark}

\begin{remark}
The proof gives the following simple algorithm for boundary determination in strictly convex domains:
\begin{enumerate}
\item First take an arbitrary point $x_0 \in \partial\Omega$. Our aim is to find the value of $\sigma$ at $x_0$.
\item
Select a direction $\rho$ such that $\rho \cdot x_0 = \max_{x \in \ol \Omega} \rho \cdot x = t_0$. This is possible by strict convexity of the set~$\Omega$.
\item
Fix a parameter $t$ so that $t<t_0$, which corresponds to taking a hyperplane that intersects $\Omega$.
\item
Choose a small $\gamma_0\in \mathbb{R}_{+}$ and a large $\gamma^0\in \mathbb{R}_{+}$ such that
\[
I_{\gamma_0} \rightarrow +\infty \ \text{and}\ I_{\gamma^0} \rightarrow -\infty \text{ as $\tau \to \infty$}.
\]
\item
Suppose $\gamma_{j-1}$ and $\gamma^{j-1}$ have been determined. Define $\kappa_{j} = \frac{\gamma_{j-1}+\gamma^{j-1}}{2}$ and calculate $I_{\kappa_{j}}$.
\item
If $I_{\kappa_{j}} \rightarrow +\infty$, take $\gamma_{j} = \kappa_j$. If $I_{\kappa_{j}} \rightarrow -\infty$, take $\gamma^{j} = \kappa_j$. Otherwise $\kappa_j = \sigma(x_0)$, in which case we are done.
\item
Repeat steps (5) and (6).
\item
We have
\begin{equation*}
\lim_{j \to \infty} \kappa_j = \lim_{j \to \infty} \gamma_j = \lim_{j \to \infty} \gamma^j = \sigma(x_0).
\end{equation*}
\end{enumerate}
\end{remark}


\bibliography{math}
\bibliographystyle{abbrv}

\end{document}